\newtheorem*{thmnonr}{Theorem}
\newtheorem*{mainthm}{Main Theorem}
\newtheorem{thm}{Theorem}[section]
\newtheorem{prop}[thm]{Proposition}
\newtheorem{cor}[thm]{Corollary}
\theoremstyle{definition}
\newtheorem{defi}[thm]{Definition}
\newtheorem{rem}[thm]{Remark}
\newtheorem{exa}[thm]{Example}
\newtheorem*{exanonr}{Example}
\def\SL{\mathrm{SL}}
\def\N{\mathds{N}}
\def\F{\mathds{F}}
\def\Z{\mathds{Z}}
\def\R{\mathds{R}}
\def\C{\mathds{C}}
\def\phi{\varphi}
\def\P{\mathds{P}}
\def\fractional#1{\left\{ #1 \right\}}
\DeclareMathOperator*{\diag}{diag}
\DeclareMathOperator*{\Hom}{Hom}
\DeclareMathOperator*{\Tr}{Tr}
\DeclareMathOperator*{\conv}{conv}
\DeclareMathOperator*{\supp}{supp}
\DeclareMathOperator*{\vol}{vol}
\DeclareMathOperator*{\age}{\alpha}
\def\floor#1{\left\lfloor #1 \right\rfloor}%
\def\fractional#1{\left\{ #1 \right\}}%
\def\ie{i.\,e.\xspace}
\def\eg{e.\,g.\xspace}
\begin{document}

\title[Lattice polytopes, finite abelian subgroups in $\SL(n,\C)$, coding theory]
{Lattice polytopes, finite abelian subgroups in $\SL(n,\C)$ and coding theory}

\author[Victor Batyrev]{Victor Batyrev}
\address{Mathematisches Institut, Universit\"at T\"ubingen,
Auf der Morgenstelle 10, 72076 T\"ubingen, Germany}
\email{batyrev@everest.mathematik.uni-tuebingen.de}

\author[Johannes Hofscheier]{Johannes Hofscheier}
\address{Mathematisches Institut, Universit\"at T\"ubingen,
Auf der Morgenstelle 10, 72076 T\"ubingen, Germany}
\email{johannes.hofscheier@uni-tuebingen.de}

\subjclass[2000]{Primary 52B20; Secondary 14B05, 11B68}

\begin{abstract}
  We consider $d$-dimensional lattice polytopes $\Delta$ with
  $h^*$-polynomial $h^*_\Delta=1+h_k^*t^k$ for $1<k<(d+1)/2$ and
  relate them to some abelian subgroups of $\SL_{d+1}(\C)$ of order
  $1+h_k^*=p^r$ where $p$ is a prime number. These subgroups can be
  investigate by means of coding theory as special linear constant
  weight codes in $\F_p^{d+1}$. If $p =2$, then the classication of
  these codes and corresponding lattice polytopes can be obtained
  using a theorem of Bonisoli. If $p > 2$, the main technical tool in
  the classification of these linear codes is the non-vanishing
  theorem for generalized Bernoulli numbers $B_{1,\chi}^{(r)}$
  associated with odd characters $\chi:\F_q^*\to\C^*$ where $q=p^r$.
  Our result implies a complete classification of all lattice
  polytopes whose $h^*$-polynomial is a binomial.
\end{abstract}

\date{\today}

\maketitle

\thispagestyle{empty}

\section*{Introduction}
\label{sec:intro}

Let $M\cong\Z^d$ be a $d$-dimensional lattice and $\Delta\subset
M_\R:=M\otimes\R \cong \R^d$ a $d$-dimensional lattice polytope, \ie
the vertices of $\Delta$ are contained in the lattice $M$. It is
well-known (see \eg \cite[Section
3]{BeckRobins:ComputingTheContinuousDiscretely}) that the
\emph{Ehrhart series}
\begin{align*}
  \mathrm{Ehr}_\Delta(t)=1+\sum_{k\ge1}|k\Delta\cap M|t^k
\end{align*}
is a rational function of the form
\begin{align*}
  \mathrm{Ehr}_\Delta(t)= \frac{1+h_1^*t+\ldots+h_d^*t^d}{(1-t)^{d+1}}
  = \frac{h_\Delta^*(t)}{(1-t)^{d+1}}
\end{align*}
where the coefficients $h_1^*,\ldots,h_d^*$ of the polynomial
$h_\Delta^*(t)$ are nonnegative integers. We call the polynomial
$h_\Delta^*(t)$ the \emph{$h^*$-polynomial} of $\Delta$.  Recall some
basic facts about $h^*$-polynomials.

Two lattice polytopes $\Delta\subset M_\R$ and $\Delta'\subset M'_\R$
are called isomorphic if there exists a bijective affine linear map
$\varphi:M_\R\to M'_\R$ such that $\varphi(\Delta)=\Delta'$ and
$\varphi(M)=M'$.  If $\Delta$ and $\Delta'$ are isomorphic lattice
polytopes, then $|k\Delta\cap M|=|k\Delta'\cap M'|$ for all $k\ge1$
and we have $h_\Delta^*(t)=h_{\Delta'}^*(t)$.

For any $d$-dimensional lattice polytope $\Delta\subseteq M_\R$, we
construct a new $(d+1)$-dimensional lattice polytope
\begin{align*}
  \Delta'\coloneqq\conv(\Delta\times\{0\},(\mathbf{0},1))\subseteq
  M_\R\oplus\R
\end{align*}
which is called the \emph{pyramid} over $\Delta$. It is easy to show
that $(1-t) \mathrm{Ehr}_{\Delta'}(t)=\mathrm{Ehr}_\Delta(t)$ (see \eg
\cite[Theorem 2.4]{BeckRobins:ComputingTheContinuousDiscretely}).  So
we obtain again $h_\Delta^*(t)=h_{\Delta'}^*(t)$.

Let $\mathbf{e}_1, \ldots,\mathbf{e}_d$ be a $\Z$-basis of $M$.  A
$d$-dimensional simplex $\Delta$ is called a \emph{unimodular simplex}
if it is isomorphic to the convex hull of $\mathbf{0},\mathbf{e}_1,
\ldots, \mathbf{e}_d$.  For any lattice polytope $\Delta$ we denote by
$\mathrm{vol}(\Delta)$ the \emph{integral volume} of $\Delta$, \ie
$\mathrm{vol}(\cdot)$ is the $d!$-multipliple of the standard Lebesgue
volume on $\R^d$ such that for any unimodular simplex $\Delta$ holds
$\mathrm{vol}(\Delta) =1$.  By \cite[Corollary
3.21]{BeckRobins:ComputingTheContinuousDiscretely}, we have
$h^*_\Delta(1)=1+\sum_{k=1}^dh_k^*=\vol(\Delta)$.

The knowledge of the $h^*$-polynomial of $\Delta$ imposes strong
conditions on the polytope $\Delta$. For example, the equalities
$h_1^*=\ldots=h_d^*=0$ hold if and only if $\Delta$ is a unimodular
simplex.

The purpose of this paper is to classify (up to isomorphism) lattice
polytopes $\Delta$ whose $h^*$-polynomial has exactly one nonzero
coefficient $h_k^*\neq0$ for $k \in \{ 1,\ldots,d \}$, \ie
$h_\Delta=1+h_k^*t^k$. Such an $h^*$-polynomial will be called an
\emph{$h^*$-binomial}.

Simplest examples of $h^*$-binomials are the ones of degree $k =1$,
i.e. linear $h^*$-polynomials.  Lattice polytopes $\Delta$ such that
$h^*_\Delta$ is linear have been classified by the first author and
Benjamin Nill in \cite{BatyrevNill:LatticePolytopes} (we will give a
summary of these results in section \ref{sec:recoll}). Thus, for our
purpose, it remains to investigate $d$-dimensional lattice polytopes
having an $h^*$-binomial of degree $k > 1$. One can show that the
condition $k>1$ implies that $\Delta$ is a simplex and the degree $k$
of $h^*_\Delta$ is at most $(d+1)/2$. In particular, $3$-dimensional
lattice simplices $\Delta \subset \R^3$ having nonlinear
$h^*$-binomial are precisly {\em empty lattice tetrahedra} with
$h^*$-polynomial $h_\Delta^*=1+h_2^*t^2$.  These tetrahedra have been
classified by G.\,K.~White in \cite{White:LatticeTetrahedra}.  In
\cite{BH:GeneralizedWhite}, we generalized the classification of White
to arbitrary odd dimension $d \geq 3$ using the so called ``Terminal
Lemma'' of M. Reid \cite{Reid:YoungPersonsGuide}. This implies the
classification of $d$-dimensional lattice simplices having an
$h^*$-binomial of degree $k=(d+1)/2 >1$ in (see also section
\ref{sec:recoll}).  Therefore, this paper is devoted mainly to lattice
simplices having an $h^*$-binomial with degree $1<k<(d+1)/2$ which are
not pyramids over lower-dimensional lattice simplices.

Let us give an overview of our approach. For a $d$-dimensional lattice
simplex $\Delta\subseteq M_\R$ with vertices
$\mathbf{v}_0,\ldots,\mathbf{v}_d$, we define a subgroup
$G_\Delta\subseteq\SL_{d+1}(\C)$ which consists of all diagonal
matrices
\begin{align*}
  g (\lambda):= \diag(e^{2\pi i\lambda_0},\ldots, e^{2\pi i
    \lambda_d})
\end{align*}
such that $\lambda_i\in[0,1[$ for all $i \in \{ 0,\ldots,d\}$,
$\sum_{i=0}^d\lambda_i\mathbf{v}_i\in M$ and
$\sum_{i=0}^d\lambda_i\in\Z$. The simplex $\Delta$ is determined up to
isomorphism by the subgroup
$G_\Delta\subseteq\SL_{d+1}(\C)$. Moreover, the coefficients of its
$h^*$-polynomial $h_\Delta^*=1+\sum_{k=1}^dh_k^*t^k$ can be computed
as follows
\begin{align*}
  h_k^*=|\{\diag(e^{2\pi i\lambda_0},\ldots,e^{2\pi i\lambda_d})\in
  G_\Delta:\sum_{i=0}^d\lambda_i=k\}|, \;\; 0 \leq k \leq d.
\end{align*}
It is easy to show that a lattice simplex $\Delta$ is a pyramid over a
lower-dimensional simplex if and only if there exists $i \in \{
1,\ldots,d+1 \}$ such that $\lambda_i=0$ for all $g(\lambda) \in
G_\Delta$.

For a prime number $p$, we denote the group of $p$-th roots of unity
in $\C$ by $\mu_p$.
\begin{thmnonr}
  Let $\Delta$ be a $d$-dimensional lattice simplex which is not a
  pyramid over a lower-dimensional simplex. If the degree $k$ of the
  $h^*$-polynomial $h_\Delta^*=1+h_k^*t^k$ satisfies $1<k<(d+1)/2$,
  then there exists a prime number $p$ such that all nontrivial
  elements $g(\lambda)\in G_\Delta$ have order $p$.  In particular,
  $G_\Delta$ can be considered as a subgroup of $\mu_p^{d+1}$ and one
  obtains $\vol(\Delta)=1+h_k^*=| G_\Delta | = p^r$ for a positive
  integer $r$.
\end{thmnonr}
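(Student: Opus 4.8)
The plan is to argue entirely inside the finite abelian group $G := G_\Delta$, using the hypothesis on $h^*_\Delta$ only through the ``weight'' $w(g(\lambda)) := \sum_{i=0}^d\lambda_i\in\Z$ and the ``support'' $\supp(g(\lambda)) := \{i : \lambda_i\ne 0\}$. From the displayed formula for the $h^*$-coefficients together with $h^*_\Delta = 1 + h_k^*t^k$ one gets immediately that $w(g)\in\{0,k\}$ for every $g\in G$, with $w(g)=0$ only for $g=\id$ (all $\lambda_i$ are $\ge 0$). Next I would show that every nontrivial $g\in G$ has $|\supp(g)| = 2k$: if $g$ has order $\ge 3$ this follows from $w(g^{-1}) = |\supp(g)| - w(g)$ (replace each nonzero $\lambda_i$ by $1-\lambda_i$) together with $w(g^{-1}) = k$, and if $g$ has order $2$ then $\lambda_i\in\{0,\tfrac12\}$, so $w(g) = \tfrac12|\supp(g)|$. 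Since $\lambda_i = 0$ forces $\{m\lambda_i\} = 0$, each nontrivial power $g^m$ satisfies $\supp(g^m)\subseteq\supp(g)$, and equality of cardinalities gives $\supp(g^m) = \supp(g)$; hence all nontrivial elements of a cyclic subgroup $\langle g\rangle$ share one common support. Finally, by the stated pyramid criterion, ``$\Delta$ is not a pyramid'' is equivalent to $\bigcup_{g\in G}\supp(g) = \{0,\ldots,d\}$.

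The crux is the claim that no element of $G$ has composite order. Suppose $g\in G$ has composite order $n$; put $S := \supp(g)$, so $|S| = 2k\le d < d+1$ (here $k < (d+1)/2$ enters), and every nontrivial element of $\langle g\rangle$ is supported on $S$. As $\Delta$ is not a pyramid, choose a coordinate $j\notin S$ and an element of $G$ having $j$ in its support; replacing it by a prime-order power of itself (which has the same support, both supports having size $2k$), I obtain $g'\in G$ of prime order $q$ with $j\in\supp(g')$. Since $j\notin S$, we have $g'\notin\langle g\rangle$. Now set $h := gg'$. Because $(g')^q = \id$ and, $n$ being composite and $q$ prime, $n\nmid q$, we get $h^q = g^q\ne\id$; thus $g^q$ is a nontrivial element of $\langle g\rangle$, so $\supp(g^q) = S$, and it is also a nontrivial power of $h$, so $\supp(h) = \supp(h^q) = \supp(g^q) = S$. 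But $\lambda_j(g) = 0$ and $\lambda_j(g')\ne 0$, whence $\lambda_j(h) = \{\lambda_j(g) + \lambda_j(g')\} = \lambda_j(g')\ne 0$, i.e.\ $j\in\supp(h) = S$ — a contradiction. This proves the claim.

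It follows that every nontrivial element of $G$ has prime order; and if two distinct primes $p\ne q$ arose as orders of elements $g,g'\in G$, then $gg'$ would have composite order $pq$ ($G$ being abelian), against the claim. Hence there is a single prime $p$ with every nontrivial $g\in G$ of order $p$; by Cauchy's theorem $|G|$ is then a power of $p$, and $G$ is elementary abelian, $G\cong(\Z/p)^r$ with $r\ge 1$ (since $h_k^*\ge 1$ forces $|G|\ge 2$). Each $\lambda_i(g)$ is therefore a multiple of $1/p$, so $G$ sits inside $\mu_p^{d+1}$; and $\vol(\Delta) = h^*_\Delta(1) = 1 + h_k^* = 1 + |\{g\in G : g\ne\id\}| = |G| = p^r$, using the cited identity $\vol(\Delta) = h^*_\Delta(1)$.

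I expect the composite-order claim to be the real work. Its delicate points are the choice of the auxiliary element $g'$ — forced to have prime order and to ``see'' a coordinate outside $\supp(g)$, which is precisely what precludes $g$ from being an unwanted higher power — and the observation that $h^q = g^q$ is pinned to have support $S$ simultaneously from inside $\langle g\rangle$ and from inside $\langle h\rangle$. Everything downstream is bookkeeping; the enabling structural facts are that within any cyclic subgroup all nontrivial elements share one support and that this support has size exactly $2k$.
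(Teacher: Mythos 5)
Your proof is correct. The groundwork coincides with the paper's: from $h^*_\Delta=1+h_k^*t^k$ you deduce that every nontrivial element has age $k$ and hence support of size exactly $2k$ (this is Proposition \ref{prop:number_of_zeros}), and that supports are preserved under passing to nontrivial powers (equivalent to Corollary \ref{cor:a_i_coprime_to_n}, which states $\gcd(a_i,n)=1$ for each nonzero coordinate $a_i/n$). Where you genuinely diverge is the primality step. The paper first proves that all nontrivial elements of $\Lambda_\Delta$ share one denominator $n$ (Proposition \ref{prop:Denominator_coeff_are_the_same}, a two-case analysis according to whether two supports coincide), and then shows $n$ is prime by taking an element of different support, locating a cancelling coordinate where $a_i/n+c_i/n=1$, and building a combination whose $i$-th coordinate has a nontrivial common factor with $n$, contradicting coprimality. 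You instead rule out composite orders directly: the clash between $\supp(h)=\supp(h^q)=\supp(g^q)=\supp(g)$ and $j\in\supp(h)\setminus\supp(g)$ is a clean contradiction (and the verification that $g^q\neq\id$ because a composite $n$ cannot divide a prime $q$ is exactly the point where compositeness is used); you then recover a single prime from elementary abelian group theory, since a product of elements of distinct prime orders in an abelian group has composite order, and Cauchy's theorem finishes $|G|=p^r$. Both arguments rest on the same two pillars --- constant support size $2k<d+1$, and the existence (via the non-pyramid hypothesis) of an element supported outside any given support --- but your organization bypasses the common-denominator lemma entirely, which is arguably tighter; the paper's route, in exchange, produces that common denominator as an explicit intermediate statement that it then reuses when identifying $\Lambda_\Delta$ with a linear code in $\F_p^{d+1}$.
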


For a prime number $p$, we denote by $\F_p$ the finite field of order
$p$.  By the above theorem, the subgroup $G_\Delta \subseteq
\mu_p^{d+1}$ can be identified with an $r$-dimensional linear subspace
$L_\Delta \subseteq \F_p^{d+1}$.

Let $L\subseteq\F_p^n$ be an arbitrary $r$-dimensional linear subspace
in $\F_p^n$. Choose a basis $\mathbf{a}_1,\ldots,\mathbf{a}_r$ of
$L$. For $i=1,\ldots,r$ we consider the vector $\mathbf{a}_i =
(a_{i1}, \ldots, a_{in})$ as the $i$-th row of a $(r\times n)$-matrix
$A$ which we call a \emph{generator matrix} of $L$.  The linear
subspace $L$ is uniquely determined by the matrix $A$.  Let us write
an arbitary vector $ \mathbf{v}$ in $\F_p^n$ as
\begin{align*}
  \mathbf{v} = (\overline{v_1}, \ldots, \overline{v_n}) \in
  (\Z/p\Z)^n, \;\; 0 \leq v_i < p \;\; \forall 1 \leq i \leq n.
\end{align*} 
We consider the following two functions on $\F^n_p$ with values in
$\Z_{\geq 0}$:
\begin{itemize}
\item the \emph{weight}-function
  \begin{align*}
    \omega(\mathbf{v}) := |\{ i \in \{ 1, \ldots, n \} \; : \; v_i
    \neq 0 \}|
  \end{align*}
\item and the \emph{age}-function
  \begin{align*}
    \alpha(\mathbf{v}) := \sum_{i=0}^dv_i.
  \end{align*}
\end{itemize}
We say that a linear subspace $L \subset \F_p^n$ has \emph{constant
  weight} (resp.  \emph{constant age}), if the \emph{weight}-function
$\omega$ (resp. \emph{age}-function $\alpha$) is constant on the set
of all nonzero vectors in $L$.  It is easy to show that if a linear
subspace $L\subseteq \F_p^n$ has constant age then $L$ has also
constant weight and one has
\begin{align*}
  p \omega (\mathbf{v})=2\alpha(\mathbf{v}) \;\; \forall \mathbf{v}
  \in L \setminus \{0\}.
\end{align*}
We will see below some examples of linear subspaces $L$ in $\F^n_p$ of
constant weight that do not have constant age.

In coding theory linear subspaces of $\F_p^n$ are called \emph{linear
  codes}. It is important to remark that the linear code $L_\Delta
\subset F_p^{d+1}$ associated to a $d$-dimensional lattice simplex
$\Delta$ with $h^*$-binomial of degree $1<k<(d+1)/2$ has constant age
$kp$.  There are special linear codes of constant weight which are
called \emph{simplex codes}. These linear codes are constructed as
follows.  Fix a positive integer $l \in\N$ and put $m:= (p^l-1)/(p-1)$
to be the number of points in $(l-1)$-dimensional projective space
$\P^{l-1}$ over $\F_p$. For any point $x \in \P^{l-1}$ one chooses a
nonzero vector $A(x) \in \F^l_p$ in the corresponding $1$-dimensional
linear subspace in $\F_p^l$. We consider the vectors $A(x_1), \ldots,
A(x_m)$ as columns of a $l \times m$-matrix $A$ and define the
$l$-dimensional linear simplex code in $\F_p^m$ by the generator
matrix $A$. We remark that the above matrix $A$ is determined uniquely
up to permutations of its columns and multiplications by nonzero
elements in $\F_p$.

If $p=2$, then the \emph{weight}-function $\omega$ and the
\emph{age}-function $\alpha$ coincide.  We observe that in this case
the generator matrix of a simplex code is unique up to permutation of
the columns and the classification of linear codes of constant age can
be obtained using a theorem of Bonisoli (see \eg
\cite{Bonisoli:EquidistantLinearCodes} or
\cite{WardWood:EquivalenceOfCodes}).

\begin{mainthm}[Case $p=2$]
  Let $\Delta$ be a $d$-dimensional lattice simplex with
  $h^*$-binomial of degree $1<k<(d+1)/2$. Assume that
  $\mathrm{vol}(\Delta)=2^r$ for a positive integer $r$ and $\Delta$
  is not a pyramid over a lower-dimensional simplex. Then the numbers
  $k, d, r$ are related by the equation:
  \begin{align*}
    2^{r-2}(d+1)=k(2^r-1)
  \end{align*}
  and the generating $(r \times (d+1))$-matrix of the linear code
  $L_\Delta$ can be written up to a permutation of the columns in the
  form $(S_r,\ldots ,S_r)$ where $S_r$ is repeated
  $(k/2^{r-2})$-times.
\end{mainthm}

\begin{exanonr}
  Let $\Delta$ be a $d$-dimensional lattice simplex with
  $h^*$-binomial of degree $1<k<(d+1)/2$. Assume that
  $\mathrm{vol}(\Delta)=2$ (i.e. $r =1$, or $h_\Delta^*=1+t^k$) and
  $\Delta$ is not a pyramid over a lower-dimensional simplex. Then the
  above theorem implies that $d+1=2k$ and $G_\Delta=\{\pm E \}$.
\end{exanonr}

The case $p>2$ is more involved.

\begin{exanonr} If $p =3$ then the matrix
  \begin{align*}
    A_1 =
    \begin{pmatrix}
      1 & 0 & 1 & 2 \\
      0 & 1 & 1 & 1
    \end{pmatrix}
  \end{align*}
  generates a $2$-dimensional simplex code $L_1 \subset \F_3^4$ of
  constant weight $3$. But the rows of $A_1$ have different age.  So
  the linear code $L_1$ does not have constant age.
\end{exanonr}

\begin{exanonr}
  We can modify the previous example as follows. Consider the
  $2$-dimensional linear code $L$ in $\F_3^8$ generated by the matrix:
  \begin{align*}
    A = (A_1, -A_1) =
    \begin{pmatrix}
      1 & 0 & 1 & 2 &\; &  2 & 0 & 2 & 1 \\
      0 & 1 & 1 & 1 &\; & 0 & 2 & 2 & 2
    \end{pmatrix}
  \end{align*}
  Then the linear code $L$ has not only the constant weight $6$, but
  it has the constant age $9$, because for any vector
  $(\overline{v_1}, \ldots, \overline{v_8}) \in L$ one has $v_i + v_{i
    +4} \in \{ 0, 3 \}$ for all $1 \leq i \leq 4$.
\end{exanonr}

Our main theorem in the case $p>2$ claims that the last example is in
some sense typical for constucting any linear code of constant age:

\begin{mainthm}
  Let $\Delta$ be a $d$-dimensional lattice simplex having an
  $h^*$-binomial of degree $1<k<(d+1)/2$. Assume that
  $\mathrm{vol}(\Delta)=p^r$ for a prime number $p$ and a positive
  integer $r$. Assume that $\Delta$ is not a pyramid over a
  lower-dimensional simplex. Then the number $p,d,k, r$ are related by
  the equation
  \begin{align*}
    (p^r-p^{r-1})(d+1)=2k(p^r-1)
  \end{align*}
  and the generating $(r \times (d+1))$-matrix of the linear code
  $L_\Delta$ can be written up to permutation of the columns in the
  form
  \begin{align*}
    (A_1, -A_1, A_2, -A_2, \ldots, A_s, -A_s ),
  \end{align*}
  where $s=k/p^{r-1}$ and $A_1, \ldots, A_s$ are generator matrices of
  $r$-dimensional simplex codes.
\end{mainthm}

In the proof of this theorem we will use some number theoretic results
which have probably independent interest. Fix an odd prime $p$ and a
prime power $q=p^r$. Let $B_1$ be the \emph{$1$st (periodic) Bernoulli
  function} which maps a real number $x$ to
\begin{align*}
  B_1(x)=
  \begin{cases}
    \fractional{x}-\frac{1}{2} & x\not\in\Z\\
    0 & x\in\Z
  \end{cases}
\end{align*}
where $\fractional{x}$ denotes the \emph{fractional part} of $x$, \ie
$\fractional{x}=x-\floor{x}$ where $\floor{x}$ is the biggest integer
which is smaller than or equal to $x$. For any $a \in \F_q$ we denote
by ${\rm Tr}(a) \in \{ 0, 1, \ldots, p-1 \}$ the representative of the
trace of $a$ in $\F_p$. We define generalized Bernoulli numbers
associated to characters $\chi:\F_q^*\to\C^*$ as
\begin{align*}
  B_{1, \chi}^{(r)} \coloneqq \sum_{a \in \F_q^*} \chi(a) B_1 \left(
    \frac{\Tr(a)}{p} \right).
\end{align*}
The number $B_{1, \chi}^{(1)}$ coincides with the classical
generalized Bernoulli number $B_{1, \chi}$ (see \eg \cite[Chapter
4]{Washington:CyclotomicFields}).  The important technical tool in the
proof of the main theorem is the following non-vanishing theorem
similar to the classical case (see \eg
\cite{Washington:CyclotomicFields}).
\begin{thmnonr}
  Let $\chi:\F_q^*\to\C^*$ be an arbitrary odd character, \ie
  $\chi(-1)=-1$. Then $B_{1,\chi}^{(r)}\neq 0$.
\end{thmnonr}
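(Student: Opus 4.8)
The plan is to express $B_{1,\chi}^{(r)}$ in terms of a classical generalized Bernoulli number by a Gauss sum computation, and then to invoke the classical non-vanishing theorem.

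First I would discretely Fourier-expand the $1$st periodic Bernoulli function over $\Z/p\Z$. Writing $\zeta_p := e^{2\pi i/p}$, differentiating the identity $\sum_{t=0}^{p-1}x^t=0$ (valid at any nontrivial $p$-th root of unity $x$) gives $\sum_{t=1}^{p-1}t\,x^t = p/(x-1)$, whence for every $t\in\{0,1,\dots,p-1\}$
\begin{align*}
  B_1\!\left(\frac{t}{p}\right) = \frac{1}{p}\sum_{j=1}^{p-1}\left(\frac{1}{\zeta_p^{-j}-1}+\frac{1}{2}\right)\zeta_p^{\,jt};
\end{align*}
the $j=0$ term does not appear, precisely because $\sum_{t=1}^{p-1}(t/p-\tfrac12)=0$, which is the condition $B_1(0)=0$. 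Let $\psi\colon\F_q\to\C^*$, $\psi(x)=\zeta_p^{\Tr(x)}$, be the canonical additive character of $\F_q$. Substituting this expansion with $t=\Tr(a)$, using $\zeta_p^{\,j\Tr(a)}=\zeta_p^{\Tr(ja)}=\psi(ja)$ for $j\in\F_p^*\subseteq\F_q^*$, and interchanging the two finite sums, I obtain
\begin{align*}
  B_{1,\chi}^{(r)} = \frac{1}{p}\sum_{j=1}^{p-1}\left(\frac{1}{\zeta_p^{-j}-1}+\frac{1}{2}\right)\sum_{a\in\F_q^*}\chi(a)\psi(ja) = \frac{g(\chi)}{p}\sum_{j=1}^{p-1}\left(\frac{1}{\zeta_p^{-j}-1}+\frac{1}{2}\right)\overline{\chi}(j),
\end{align*}
where $g(\chi):=\sum_{a\in\F_q^*}\chi(a)\psi(a)$ is the Gauss sum and the last equality uses the substitution $a\mapsto j^{-1}a$ in the inner sum.

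Next I would use that $\chi$ is odd. Since $-1\in\F_p^*\subseteq\F_q^*$ and $\chi(-1)=-1$, the restriction $\theta:=\overline{\chi}\vert_{\F_p^*}$ is a nontrivial odd character of $\F_p^*$, i.e.\ an odd Dirichlet character of conductor $p$; in particular $\sum_{j\in\F_p^*}\theta(j)=0$, so the $\tfrac12$-contribution above drops out. Re-indexing $j\mapsto -j$ and using $\theta(-1)=-1$ turns the remaining sum into $-\sum_{a=1}^{p-1}\theta(a)/(\zeta_p^{a}-1)$; expanding $1/(\zeta_p^{a}-1)=\tfrac{1}{p}\sum_{t=1}^{p-1}t\,\zeta_p^{\,at}$ (again the differentiated geometric series) and evaluating the inner character sum as $\overline{\theta}(t)\,\tau(\theta)$ with $\tau(\theta):=\sum_{a=1}^{p-1}\theta(a)\zeta_p^{a}$ (a standard Gauss sum relation, $\theta$ being primitive mod $p$), I arrive at
\begin{align*}
  B_{1,\chi}^{(r)} = -\,\frac{g(\chi)\,\tau(\theta)}{p}\cdot\frac{1}{p}\sum_{t=1}^{p-1}t\,\overline{\theta}(t) = -\,\frac{g(\chi)\,\tau(\theta)}{p}\,B_{1,\overline{\theta}},
\end{align*}
where $B_{1,\overline{\theta}}$ is the classical generalized Bernoulli number attached to the Dirichlet character $\overline{\theta}=\chi\vert_{\F_p^*}$, which is again odd.

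It then remains to invoke three standard facts: $|g(\chi)|=q^{1/2}\neq 0$ and $|\tau(\theta)|=p^{1/2}\neq 0$ (both $\chi$ and $\theta$ being nontrivial), and $B_{1,\overline{\theta}}\neq 0$ because $\overline{\theta}$ is an odd Dirichlet character --- the classical non-vanishing theorem, equivalent to $L(1,\overline{\theta})\neq 0$ (see e.g.\ \cite[Chapter 4]{Washington:CyclotomicFields}). Multiplying these together gives $B_{1,\chi}^{(r)}\neq 0$. In this approach the only substantial ingredient is the classical non-vanishing statement; the actual work --- and the place where sign or convention errors most easily creep in --- is keeping the character bookkeeping straight: that $\overline{\chi}\vert_{\F_p^*}$ is genuinely odd (hence nontrivial), that the substitution $j\mapsto -j$ reproduces exactly the classical Gauss sum $\tau$ and Bernoulli number $B_{1,\cdot}$, and that the normalization of $B_1$, the choice of additive character, and the distinction between $\chi$ and $\overline{\chi}$ stay consistent throughout.
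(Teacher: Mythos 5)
Your proof is correct, and it takes a genuinely different route from the paper's. The paper never Fourier-expands $B_1$: it computes $|B_{1,\chi}^{(r)}|^2$ directly as a double sum over $\F_q^*\times\F_q^*$, substitutes $c=ab^{-1}$, and shows by an elementary decomposition of $\F_q$ along the kernels of the linear functionals $a\mapsto\Tr(a)$ and $a\mapsto\Tr(ac)$ that the inner sum $\sum_a B_1(\Tr(a)/p)B_1(\Tr(ac)/p)$ vanishes for $c\notin\F_p$ and equals $p^{r-1}\sum_{a=1}^{p-1}B_1(a/p)B_1(ac/p)$ for $c\in\F_p^*$; this yields $|B_{1,\chi}^{(r)}|^2=p^{r-1}\,|B_{1,\chi|_{\F_p^*}}^{(1)}|^2$ and then the classical non-vanishing finishes the job. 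Your Gauss-sum computation proves the sharper \emph{exact} identity
\begin{align*}
  B_{1,\chi}^{(r)} = -\,\frac{g(\chi)\,\tau(\overline{\chi}|_{\F_p^*})}{p}\,B_{1,\chi|_{\F_p^*}},
\end{align*}
which upon taking absolute values (using $|g(\chi)|=q^{1/2}$, $|\tau|=p^{1/2}$) recovers exactly the paper's identity $|B_{1,\chi}^{(r)}|^2=p^{r-1}|B_{1,\chi|_{\F_p^*}}^{(1)}|^2$ --- so you get the argument of the complex number, not just its modulus, at the price of importing the absolute-value formulas for nontrivial Gauss sums over $\F_q$ and $\F_p$. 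The paper's route is more self-contained (no Gauss sums, only counting cosets of trace kernels), while yours is shorter if one takes the standard Gauss-sum facts for granted. Both arguments reduce to the same external input, the classical non-vanishing of $B_{1,\theta}$ for an odd Dirichlet character $\theta$ mod $p$, and both correctly use that $\chi|_{\F_p^*}$ is odd because $-1\in\F_p^*$. Two cosmetic points: the identity $\sum_{t=1}^{p-1}t\,x^t=p/(x-1)$ is obtained by differentiating the polynomial identity $\sum_{t=0}^{p-1}x^t=(x^p-1)/(x-1)$ and \emph{then} evaluating at a nontrivial $p$-th root of unity (not by differentiating ``$\sum x^t=0$'', which only holds pointwise); and your verification that the $j=0$ Fourier coefficient vanishes is the computation $\sum_{t=1}^{p-1}(t/p-\tfrac12)=0$, which is the same cancellation the paper uses at the very end of its proof of Proposition \ref{prop:B1chi_arithmetic_equation}.
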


This paper is organized as follows.  In Section \ref{sec:recoll}, we
recall some already known classification results of lattice polytopes
having an $h^*$-binomial of degree $k$. In Section \ref{sec:G_Delta},
we define the group $G_\Delta$ associated to a lattice simplex
$\Delta$ and make some basic observations. In Section
\ref{sec:k_in_between}, we investigate the group $G_\Delta$ of a
lattice simplex $\Delta$ having an $h^*$-binomial of degree
$1<k<(d+1)/2$ and show how these groups a related to coding theory. In
Section \ref{sec:coding_theory}, we recall the notions and results
from coding theory which we will need later on. In Section
\ref{sec:proof_of_main_thm}, we classify linear codes of constant age
and prove our main classification result. Finally, we will prove the
non-vanishing theorem for the generalized Bernoulli numbers
$B_{1,\chi}^{(r)}$ in section \ref{sec:non_vanishing_of_B_1_chi}.

\section{\texorpdfstring{The cases $k =1$ and $k =(d+1)/2$}{The cases k=1 and k=(d+1)/2}}
\label{sec:recoll}

Let $\Delta$ be a $d$-dimensional lattice polytope with $h^*$-binomial
of degree $k$.  The case $k\le1$ has been studied by the first author
and Benjamin Nill in \cite{BatyrevNill:LatticePolytopes}.  We shortly
recall their results.

\begin{defi}
  Let $M$ be a lattice.  Consider $r+1$ lattice polytopes
  $\Delta_0,\ldots,\Delta_r\subset M_\R$ and the cone
  \begin{align*}
    \sigma \coloneqq \{ (\lambda_0, \ldots, \lambda_r,
    \sum\lambda_i\Delta_i) \subset \R^{r+1} \oplus M_\R : \lambda_i\ge
    0\}.
  \end{align*}
  The polytope which arises by intersecting the cone $\sigma$ with the
  hyperplane $H\coloneqq\{\mathbf{x}\in\R^{r+1}\oplus
  M'_\R:\sum_ix_i=1\}$ is called the \emph{Cayley polytope} of
  $\Delta_0,\ldots,\Delta_r$ and we denote it by
  $\Delta_0*\ldots*\Delta_r$.
  
  If $\dim\Delta_i=0$ for $i=1,\ldots,r$, we call
  $\Delta_0*\ldots*\Delta_r$ the \emph{$r$-fold pyramid over
    $\Delta_0$}.
\end{defi}

\begin{defi}
  An $n$-dimensional lattice polytope $\Delta$ ($n\ge1$) is called
  \emph{Lawrence prism} with \emph{heights} $h_1,\ldots,h_n$, if there
  exist non-negative integers $h_1,\ldots,h_n$ such that $\Delta$ is
  the Cayley polytope of $n$ segments
  \begin{align*}
    [0,h_1],\ldots,[0,h_n]\subset\R.
  \end{align*}
\end{defi}
  
\begin{defi}
  We call an $n$-dimensional lattice polytope $\Delta$ ($n\ge2$)
  \emph{exceptional}, if it is a simplex which is the $(n-2)$-fold
  pyramid over the $2$-dimensional basic simplex multiplied by $2$.
\end{defi}
\begin{figure}[ht]
  \centering
  \includegraphics[width=5cm]{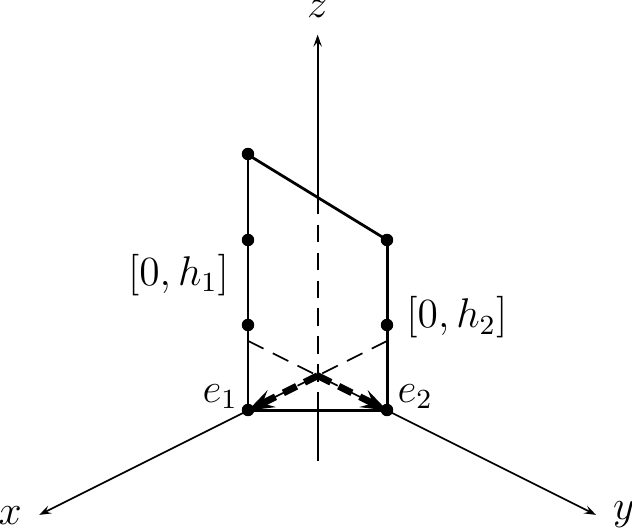}\hspace*{3cm}
  \includegraphics[width=3cm]{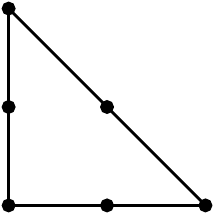}
  \caption{Illustration to the notion of Lawrence prism and to the
    notion of exceptional polytope}
  \label{fig:IllustrExceptionalPolytope}
\end{figure}

\begin{thm}[{\cite[Theorem 2.5]{BatyrevNill:LatticePolytopes}}]
  Let $\Delta$ be a $d$-dimensional lattice polytope. Then
  $\deg(h^*_\Delta)\le 1$ if and only if $\Delta$ is an exceptional
  simplex or a Lawrence prism.
\end{thm}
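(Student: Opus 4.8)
The plan is to prove the two implications separately, in both cases using freely that $h^*_\Delta$ is a pyramid invariant (recalled in the introduction), so that one may strip off all pyramid structure: a pyramid over a Lawrence prism is again a Lawrence prism (append a segment $[0,0]$), and a pyramid over an exceptional simplex is again exceptional. Hence it suffices to prove the equivalence for $\Delta$ which is not a pyramid over a lower-dimensional polytope. I also record the standard reformulation of the hypothesis: by Ehrhart--Macdonald reciprocity the series $\sum_{k\geq 1}|\mathrm{relint}(k\Delta)\cap M|\,t^k$ equals $(h_d^* t + h_{d-1}^* t^2+\dots + t^{d+1})/(1-t)^{d+1}$, so $\deg h^*_\Delta\leq 1$ holds if and only if $\mathrm{relint}((d-1)\Delta)\cap M=\emptyset$, i.e. $\Delta$ has codegree $\geq d$.

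For the direction $(\Leftarrow)$ one only has to check $\deg h^*_\Delta\leq 1$ for the non-pyramidal representatives. For $2\Delta_2$ this is the direct computation $h^*_{2\Delta_2}=1+3t$ from $|2\Delta_2\cap M|=6$ and $\vol(2\Delta_2)=4$. For a Lawrence prism $P=\Delta_0*\dots*\Delta_{n-1}$ with $\Delta_i=[0,h_i]\subset\Z$ one uses the codegree reformulation: a lattice point of $cP$ projects, under the map sending $(\lambda_0,\dots,\lambda_{n-1},m)$ to the simplex coordinates $(\lambda_0,\dots,\lambda_{n-1})$, to a tuple of nonnegative integers with sum $c$, and for the point to lie in $\mathrm{relint}(cP)$ all $n$ of these integers must be positive, forcing $c\geq n=\dim P$. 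Hence $(d-1)P$ has no interior lattice point and $\deg h^*_P\leq 1$.

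For the direction $(\Rightarrow)$ I would first treat the case that $\Delta$ is a simplex with vertices $\mathbf v_0,\dots,\mathbf v_d$. In terms of the group $G_\Delta\subseteq(\R/\Z)^{d+1}$ the hypothesis says precisely that every nontrivial $g(\lambda)\in G_\Delta$ has age $\sum_i\lambda_i=1$. Since the inverse $g(\lambda)^{-1}$ is again nontrivial and has age $|\{i:\lambda_i\neq 0\}|-1$, every nontrivial element of $G_\Delta$ has exactly two nonzero coordinates, summing to $1$. Now suppose $G_\Delta$ is not $2$-torsion and pick $g=\lambda_0\mathbf e_0+\lambda_1\mathbf e_1$ with $g^2\neq e$, so $\lambda_0,\lambda_1\notin\{0,\tfrac12\}$. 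If $\Delta$ is not a pyramid and $d\geq 2$, not every nontrivial element can be supported on $\{0,1\}$, so there is a nontrivial $g'$ sharing exactly one coordinate with $g$; a short computation shows $g'$ must then equal $\lambda_1\mathbf e_0+\lambda_0\mathbf e_b$ for some $b\notin\{0,1\}$ (up to swapping the roles of $0$ and $1$), and then $g+2g'$ has the nonzero coordinate $\lambda_1$ at position $1$ and the nonzero coordinate $\{2\lambda_0\}\neq 0$ at position $b$, yet its age cannot be $1$ — contradicting the above. Hence for $d\geq 2$ a non-pyramidal degree $\leq 1$ simplex has $G_\Delta$ of exponent $2$; identifying $G_\Delta$ with a binary linear code in $\F_2^{d+1}$ all of whose nonzero words have weight $2$, one checks this code is $\{0\}$, a single weight-$2$ word, or the ``triangle'' code isomorphic to $(\Z/2)^2$ supported on three coordinates, and only the last is non-pyramidal, forcing $d=2$ and $\Delta=2\Delta_2$; the remaining case $d=1$ is the segment $[0,h]$, a Lawrence prism. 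For a non-simplex $\Delta$ one reduces to the simplex case by fixing a lattice triangulation: for every full-dimensional cell $\sigma$ one has $\mathrm{int}(\sigma)\subseteq\mathrm{int}(\Delta)$, hence $\mathrm{relint}((d-1)\sigma)\cap M=\emptyset$, and so each $\sigma$ is, up to unimodular equivalence, a pyramid over some $[0,h]$ or over $2\Delta_2$; one then has to assemble these cells into a global Cayley decomposition $\Delta=\Delta_0*\dots*\Delta_{n-1}$ into lattice segments, except in the single case $\Delta=2\Delta_2$.

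The main obstacle is this last globalization step in $(\Rightarrow)$. The simplex case above is elementary, but passing from the statement ``each cell of a lattice triangulation of $\Delta$ is a Lawrence-type simplex'' to a global Cayley decomposition of $\Delta$ requires controlling how the cells fit together across shared facets, and it is exactly here that the exceptional polytope $2\Delta_2$ has to be isolated; this is the combinatorial heart of the argument and is where one invokes (or reproves) the structure theory of lattice polytopes of codegree $\geq d$ developed in \cite{BatyrevNill:LatticePolytopes}.
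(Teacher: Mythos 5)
The paper offers no proof of this statement: it is quoted from \cite{BatyrevNill:LatticePolytopes}, so there is no in-house argument to compare against. Judged on its own, your proposal is sound for the direction $(\Leftarrow)$ and, in the direction $(\Rightarrow)$, for the case where $\Delta$ is a simplex: the observation that $\deg h^*_\Delta\le 1$ forces every nontrivial element of $\Lambda_\Delta$ to have age $1$ and hence (comparing with its inverse) support of size exactly $2$, followed by the argument that a non-pyramidal such group must have exponent $2$ and be the ``triangle code,'' is a clean elementary route to the simplex half of the theorem, very much in the spirit of Sections \ref{sec:G_Delta} and \ref{sec:k_in_between}. One small slip there: in the $g+2g'$ step, when $\lambda_0>\tfrac{1}{2}$ the age of $g+2g'$ equals $2\lambda_1+2\lambda_0-1=1$, so ``its age cannot be $1$'' is not literally true; the correct contradiction is that $g+2g'$ has three nonzero coordinates (at positions $0$, $1$ and $b$) while every nontrivial element must have exactly two --- equivalently, the inverse of $g+2g'$ would have age $2$. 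This is trivially repaired.

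The genuine gap is the non-simplex case of $(\Rightarrow)$, and you name it yourself. Knowing that every full-dimensional cell of a lattice triangulation of $\Delta$ has degree $\le 1$, hence is a pyramid over a segment or over the doubled unimodular triangle, does not yield the theorem: adjacent cells may be Lawrence prisms with respect to a priori different lattice projections onto unimodular simplices, and nothing in the proposal forces these projections to agree across shared facets, nor rules out the exceptional triangle occurring as a cell of a strictly larger polytope. Your stated plan is to ``invoke (or reprove) the structure theory of lattice polytopes of codegree $\ge d$ developed in \cite{BatyrevNill:LatticePolytopes}'' --- but that structure theory \emph{is} the theorem being proved, so as written the argument is circular at exactly its hardest point. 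To close it one must actually produce, for a general polytope of degree $\le 1$ that is not the exceptional simplex, a single lattice projection onto a unimodular $(n-1)$-simplex whose vertex fibres are lattice segments; as it stands, the proposal proves the theorem only when $\Delta$ is a simplex.
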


It remains to consider the case $k>1$, which we will assume from now
on. By \cite[Corollary
3.16]{BeckRobins:ComputingTheContinuousDiscretely}, $h_1^*=|\Delta\cap
M|-d-1$, so $\Delta$ must be a simplex. For $d$-dimensional lattice 
simplices the coefficients
$h_k^*$ of the $h^*$-polynomial $h_{\Delta}^*(t) = \sum_{k=0}^d h_k^* t^k$ 
have a simple combinatorial description: Let
$\mathbf{v}_0,\ldots,\mathbf{v}_d$ be the vertices of $\Delta$. Then
\begin{align*}
  \Pi_\Delta \coloneqq \left\{ \sum_{i=0}^d\lambda_i(\mathbf{v}_i,1) :
    0\le\lambda_i<1 \right\} \subset M_\R\oplus\R
\end{align*}
is called the \emph{fundamental parallelepiped} of $\Delta$.  By
\cite[Corollary 3.11]{BeckRobins:ComputingTheContinuousDiscretely}),
$h_k^*$ equals the number of lattice points in the fundamental
parallelepiped $\Pi_\Delta$ with last coordinate equal to $k$.

\begin{prop}
  Let $M$ be a lattice of rank $d$ and $\Delta\subseteq M_\R$ a
  $d$-dimensional lattice polytope with $h^*$-binomial of degree
  $k>1$.  Then $k\le(d+1)/2$.
\end{prop}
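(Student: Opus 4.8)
The plan is to exploit the combinatorial description of $h_k^*$ in terms of lattice points in the fundamental parallelepiped $\Pi_\Delta$, together with a symmetry of that set. First I would recall that for a lattice simplex with vertices $\mathbf{v}_0,\ldots,\mathbf{v}_d$, every lattice point of $\Pi_\Delta$ is uniquely of the form $\mathbf{w}(\lambda)=\sum_{i=0}^d\lambda_i(\mathbf{v}_i,1)$ with $\lambda_i\in\mathbb{Q}\cap[0,1[$, and its last coordinate is $\alpha(\lambda):=\sum_i\lambda_i$, an integer; the number of such points with $\alpha(\lambda)=k$ is $h_k^*$. Equivalently, these are exactly the elements $g(\lambda)\in G_\Delta$ with $\sum_i\lambda_i=k$, so $h_k^*=|\{g(\lambda)\in G_\Delta:\alpha(\lambda)=k\}|$, and $h_0^*=1$ corresponds to $\lambda=0$.

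The key step is an involution on $\Pi_\Delta\cap(M\oplus\mathbb{Z})$. Given a lattice point $\mathbf{w}(\lambda)$ with $0\le\lambda_i<1$, define $\lambda_i':=\{-\lambda_i\}=1-\lambda_i$ whenever $\lambda_i\ne0$ and $\lambda_i'=0$ when $\lambda_i=0$; then $\mathbf{w}(\lambda')=\sum_i\lambda_i'(\mathbf{v}_i,1)$ again lies in $M\oplus\mathbb{Z}$ (it is $-\mathbf{w}(\lambda)$ modulo the lattice spanned by the $(\mathbf{v}_i,1)$), it lies in $\Pi_\Delta$, and $\alpha(\lambda')=\omega(\lambda)-\alpha(\lambda)$ where $\omega(\lambda)=|\{i:\lambda_i\ne0\}|\le d+1$. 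Since by hypothesis every nonzero lattice point of $\Pi_\Delta$ has last coordinate equal to $k$ (because $h^*_\Delta=1+h_k^*t^k$), and the involution $\lambda\mapsto\lambda'$ fixes $0$ and permutes the nonzero lattice points, we get for each such point $k=\alpha(\lambda')=\omega(\lambda)-k$, hence $\omega(\lambda)=2k$. But $\omega(\lambda)\le d+1$, so $2k\le d+1$, i.e. $k\le(d+1)/2$. One should also note that $h_k^*\ge1$ (since $k\ge1$ forces $h^*_\Delta$ nonconstant by the earlier remark that $h_1^*=\cdots=h_d^*=0$ characterizes the unimodular simplex, combined with $k>1$ meaning $\Delta$ is not a unimodular simplex), so the set of nonzero lattice points is nonempty and the argument is not vacuous.

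The main obstacle is the verification that $\mathbf{w}(\lambda')$ is genuinely a lattice point of $\Pi_\Delta$: one must check that $\lambda'_i\in[0,1[$ (immediate), that $\sum_i\lambda'_i\in\mathbb{Z}$, and that $\sum_i\lambda'_i\mathbf{v}_i\in M$. This follows because $\mathbf{w}(\lambda)+\mathbf{w}(\lambda')=\sum_{i:\lambda_i\ne0}(\mathbf{v}_i,1)$, which lies in $M\oplus\mathbb{Z}$, so $\mathbf{w}(\lambda')\in M\oplus\mathbb{Z}$ as well; projecting to the last coordinate gives $\alpha(\lambda)+\alpha(\lambda')=\omega(\lambda)$. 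A small point to be careful about is the case $\omega(\lambda)=0$, which only happens for $\lambda=0$ and is excluded. Everything else is bookkeeping, so I expect the proof to be short once the involution is set up cleanly.
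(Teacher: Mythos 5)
Your proof is correct and follows essentially the same route as the paper: the involution $\lambda_i\mapsto\fractional{1-\lambda_i}$ on lattice points of $\Pi_\Delta$, combined with the binomial hypothesis forcing both a point and its image to have last coordinate $k$, giving $2k=\omega(\lambda)\le d+1$ (the paper phrases this as a contradiction from $k>(d+1)/2$, and your sharper identity $\omega(\lambda)=2k$ is exactly its later Proposition on the number of nonzero coordinates). The only point you take for granted is that $\Delta$ is a simplex, which the paper justifies just before the proposition via $h_1^*=|\Delta\cap M|-d-1=0$.
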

\begin{proof}
  Recall that $\Delta$ is a simplex. Let
  $\mathbf{v}_0,\ldots,\mathbf{v}_d$ be the vertices of
  $\Delta$. Assume that $k>(d+1)/2$. Then there exist
  $0\le\lambda_i<1$ for $i=0,\ldots,d$ such that
  $\sum_{i=0}^d\lambda_i(\mathbf{v}_i,1)\in M\oplus\Z$ has last
  coordinate strictly bigger than $(d+1)/2$. The last coordinate of
  \begin{align*}
    \sum_{i=0}^d\fractional{1-\lambda_i}(\mathbf{v}_i,1)\in M\oplus\Z
  \end{align*}
  is strictly smaller than $(d+1)/2$, so $h_\Delta^*$ is not a
  binomial. Contradiction.
\end{proof}
We have studied the case $k=(d+1)/2$ in \cite{BH:GeneralizedWhite}. In
this case, the dimension of the lattice simplex $d=2k-1$ is odd. We
give a summary of the classification result.
 
\begin{thm}\label{thm:GeneralizedWhite}
  Let $\Delta$ be a $(2k-1)$-dimensional lattice simplex which is not
  a lattice pyramid. The following two statements are equivalent:
  \begin{enumerate}
  \item The $h^*$-binomial of $\Delta$ has degree $k$.
  \item $\Delta$ is isomorphic to the Cayley polytope $\Delta_1*\ldots
    *\Delta_k$ of $1$-dimensional empty lattice simplices
    $\Delta_i\subset M'_\R$ where $M'$ is a $k$-dimensional lattice.
  \end{enumerate}
\end{thm}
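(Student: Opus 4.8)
The plan is to prove the two implications separately: $(2)\Rightarrow(1)$ is elementary, whereas $(1)\Rightarrow(2)$ is the substantial direction and relies on Reid's Terminal Lemma.

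For $(2)\Rightarrow(1)$ I would compute directly with the group of the Cayley polytope. After a lattice translation we may assume $\Delta_i=\conv(\mathbf{0},\mathbf{c}_i)$ with each $\mathbf{c}_i\in M'$ primitive; since $\Delta=\Delta_1*\dots*\Delta_k$ is a $(2k-1)$-simplex the vectors $\mathbf{c}_1,\dots,\mathbf{c}_k$ form a $\Q$-basis of $M'_\R$. One checks that $G_\Delta\cong M'/\langle\mathbf{c}_1,\dots,\mathbf{c}_k\rangle$ and that the age $\sum_i\lambda_i$ of a class equals the number of its $\mathbf{c}_i$-coordinates that do not vanish. The non-pyramid hypothesis (concretely: the saturatedness in $M'$ of the sublattice spanned by any proper subset of the $\mathbf{c}_i$, which is the property actually enjoyed by the Cayley polytopes produced in $(1)$) then forces every nontrivial class to have all coordinates nonzero, hence age exactly $k$. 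Thus $h^*_\Delta=1+h^*_kt^k$ with $h^*_k=\vol(\Delta)-1>0$, since $\Delta$, not being a lattice pyramid, is not unimodular.

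For $(1)\Rightarrow(2)$ let $\mathbf{v}_0,\dots,\mathbf{v}_{2k-1}$ be the vertices of $\Delta$. The hypothesis $h^*_\Delta=1+h^*_kt^k$ says precisely that $\sum_{i=0}^{2k-1}\lambda_i=k$ for every nontrivial $g(\lambda)\in G_\Delta$. The first key step is to show that such a $g(\lambda)$ has no zero coordinate. Indeed $g(\lambda)^{-1}=g(\mu)$ with $\mu_i=1-\lambda_i$ if $\lambda_i\neq 0$ and $\mu_i=0$ otherwise; since $g(\mu)$ is again nontrivial, $\sum_i\mu_i=k$ too, and hence
\begin{align*}
  2k=\sum_{i=0}^{2k-1}\lambda_i+\sum_{i=0}^{2k-1}\mu_i=\sum_{i=0}^{2k-1}(\lambda_i+\mu_i)=\#\{\,i:\lambda_i\neq 0\,\}.
\end{align*}
Therefore each of the $2k$ coordinate characters $G_\Delta\to\Q/\Z$ is injective, so $G_\Delta$ is cyclic, say of order $N$, generated by $g\bigl(\tfrac1N(a_0,\dots,a_{2k-1})\bigr)$ with $\gcd(a_i,N)=1$ for every $i$.

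We are now in the situation of an isolated terminal cyclic quotient singularity: the weights $a_0,\dots,a_{2k-1}$ are units modulo $N$ with $\sum_i\fractional{ta_i/N}=k$ for every $t\not\equiv 0\pmod N$. Reid's Terminal Lemma \cite{Reid:YoungPersonsGuide} then produces a fixed-point-free involution $\sigma$ of $\{0,\dots,2k-1\}$ with $a_i+a_{\sigma(i)}\equiv 0\pmod N$ on each orbit. I expect this to be the main obstacle: one must pin down the exact form of the Terminal Lemma that covers the present hypotheses and rule out any low-dimensional exceptional configuration (cross-checking against White's classification \cite{White:LatticeTetrahedra} for $k=2$). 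Granting the involution, relabel the $2k$ vertices as $\mathbf{v}_1,\mathbf{v}_1',\dots,\mathbf{v}_k,\mathbf{v}_k'$ so that $\sigma$ interchanges $\mathbf{v}_j$ with $\mathbf{v}_j'$; then for every $g(\lambda)\in G_\Delta$ the $\lambda$-coordinates indexed by $\mathbf{v}_j$ and $\mathbf{v}_j'$ have sum in $\{0,1\}$. Hence the assignment $\mathbf{v}_j,\mathbf{v}_j'\mapsto f_j$, where $f_1,\dots,f_k$ is the standard basis of $\Z^k$, is compatible with the lattice relations among the vertices (these relations come from $G_\Delta$, and $\Delta$ is determined by $G_\Delta$) and yields an affine lattice projection $\pi$ of $\Delta$ onto the standard $(k-1)$-simplex $\conv(f_1,\dots,f_k)$ whose fibre lattice $M'$ has rank $k$. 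The fibres of $\pi$ over the vertices $f_j$ are the edges $\Delta_j:=\conv(\mathbf{v}_j,\mathbf{v}_j')$ of $\Delta$, so $\Delta=\Delta_1*\dots*\Delta_k$, and each $\Delta_j$ is a $1$-dimensional empty lattice simplex because $h^*_1=0$ forces $\Delta\cap M$ to consist of the $2k$ vertices of $\Delta$, so that every edge of $\Delta$ is primitive.
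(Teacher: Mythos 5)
The paper itself gives no proof of this theorem --- it is quoted as a summary of \cite{BH:GeneralizedWhite} --- so I can only judge your argument on its own terms. Your $(1)\Rightarrow(2)$ direction is essentially the right one and matches the strategy of that reference: from constant age $k$ you deduce that every nontrivial $g(\lambda)$ has all $2k$ coordinates nonzero, hence every coordinate projection is injective, $G_\Delta$ is cyclic with all weights coprime to $N$, and Reid's Terminal Lemma \cite{Reid:YoungPersonsGuide} (in the form: $a_1,\dots,a_{2k}$ units mod $N$ with $\sum_i\fractional{ta_i/N}=k$ for all $t\not\equiv 0$ pair off into $\{a,-a\}$) yields the involution from which the Cayley projection is built. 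That form of the lemma is exactly what is available and used in \cite{BH:GeneralizedWhite}, so the obstacle you flag there is not a real one.

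The genuine gap is in $(2)\Rightarrow(1)$. You need every nontrivial class of $M'/\langle\mathbf{c}_1,\dots,\mathbf{c}_k\rangle$ to have all $k$ coordinates nonzero, and you assert that the non-pyramid hypothesis supplies this via ``saturatedness in $M'$ of the sublattice spanned by any proper subset of the $\mathbf{c}_i$''. That inference fails: non-pyramidality only says that no single coordinate vanishes on \emph{all} classes, which is strictly weaker. Concretely, take $k=3$, $M'=\Z^3+\Z(\tfrac12,\tfrac12,0)+\Z(0,\tfrac12,\tfrac12)$ and $\Delta_i=[\mathbf{0},\mathbf{e}_i]$. Each segment is empty in $M'$, and the resulting Cayley $5$-simplex is not a pyramid because the three nontrivial classes have supports $\{1,2\}$, $\{2,3\}$, $\{1,3\}$, whose union is everything; yet each class has age $2$, so $h^*_\Delta=1+3t^2$, not a binomial of degree $k=3$. (Consistently, $G_\Delta\cong(\Z/2)^2$ is not cyclic here, whereas your direction $(1)\Rightarrow(2)$ shows condition $(1)$ forces $G_\Delta$ cyclic.) What your argument actually establishes is the equivalence of $(1)$ with ``$(2)$ plus the saturation condition on every proper subset of the $\mathbf{c}_i$'' (equivalently, plus $G_\Delta$ cyclic). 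You need to either prove the implication from the hypotheses as literally stated --- which the example above shows is impossible --- or track down the precise formulation of condition $(2)$ in \cite{BH:GeneralizedWhite} and carry the missing requirement on the segments into the statement you are proving.
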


\section{\texorpdfstring{The group $G_\Delta$ associated to a simplex $\Delta$}{The group G associated to a simplex D}}
\label{sec:G_Delta}

Let $M\cong\Z^d$ be a $d$-dimensional lattice. To a $d$-dimensional
lattice simplex $\Delta\subseteq M_\R$ with vertices
$\mathbf{v}_0,\ldots\mathbf{v}_d\in M$, we associate a group
\begin{align*}
  G_\Delta=\{\diag(e^{2\pi ix_0},\ldots,e^{2\pi
    ix_d}):x_i\in[0,1[,\sum_{i=0}^dx_i(\mathbf{v}_i,1)\in
  M\oplus\Z\}\subset\SL_{d+1}(\C).
\end{align*}

The composition of this group is component wise multiplication. Below,
we will mostly need an additive structure. That is why, we introduce
\begin{align*}
  \Lambda_\Delta=\{ \mathbf{x}=(x_0, \ldots, x_d) \in(\R/\Z)^{d+1}: \fractional{x_i}\in[0,1[,
  \sum_{i=0}^d\fractional{x_i}(\mathbf{v}_i,1) \in M\oplus\Z\}
\end{align*}
where $\left(\R/\Z\right)^{d+1}$ denotes the $(d+1)$-dimensional real
torus and where $\fractional{y}$ denotes the unique representative in
$[0,1[$ of $y\in\R/\Z$. To be more precise, the \emph{fractional part}
is the function $\fractional{\cdot}:\R\to[0,1[$ which associates to
$x\in\R$ the unique representative in $[0,1[$ of $x+\Z\in\R/\Z$. From
this we get a well-defined function
$\fractional{\cdot}:\R/\Z\to[0,1[$.

\begin{rem}
  It is clear that $G_\Delta$ and $\Lambda_\Delta$ are uniquely
  determined by each other. Indeed, they give the same group but
  $G_\Delta$ has a multiplicative structure while $\Lambda_\Delta$ has
  an additive structure.
\end{rem}

\begin{rem}
  There is a bijection between lattice points in the fundamental
  parallelepiped $\Pi_\Delta$ and elements in $\Lambda_\Delta$
  \begin{align*}
    \Lambda_\Delta\to\Pi_\Delta\cap(M\oplus\Z);\mathbf{x}\mapsto\sum_{i=0}^d\fractional{x_i}(\mathbf{v}_i,1).
  \end{align*}
  One might think of $\Lambda_\Delta$ as the lattice points in the
  fundamental parallelepiped. But instead of remembering the lattice
  points $\sum_{i=0}^d\fractional{x_i}(\mathbf{v}_i,1)$, we keep track
  of the coefficients $x_0,\ldots,x_d$ in the linear combination.
\end{rem}

Let $\mathscr{S}$ be the set of isomorphism classes of $d$-dimensional
lattice simplices with a chosen order of the vertices. Then
\begin{align*}
  \mathscr{Simpl}\to\{\text{finite
    subgroups}\;\Lambda\subset\left(\R/\Z\right)^{d+1}\};
  \Delta\mapsto\Lambda_\Delta
\end{align*}
is a bijection. We define the inverse map. Let
$\Lambda\subseteq\left(\R/\Z\right)^{d+1}$ be a finite group. Consider
the natural projection map
$\pi:\R^{d+1}\to\left(\R/\Z\right)^{d+1}$. The preimage
$M\coloneqq\pi^{-1}(\Lambda)\subseteq\R^{d+1}$ is a
$(d+1)$-dimensional lattice such that $\Z^{d+1}\subseteq M$ has finite
index. We denote the standard basis of $\Z^{d+1}$ by
$\mathbf{e}_1,\ldots,\mathbf{e}_{d+1}$. Then
$\Delta_\Lambda\coloneqq\conv(\mathbf{e}_1,\ldots,\mathbf{e}_{d+1})\subset
M_\R$ is a $d$-dimensional lattice simplex with respect to the affine
lattice $\mathrm{aff}(\mathbf{e}_1,\ldots,\mathbf{e}_{d+1})\cap M$.

The following theorem is easy to prove.
\begin{thm}
  The maps $\Delta\mapsto\Lambda_\Delta$ and
  $\Lambda\mapsto\Delta_\Lambda$ are inverse to each other. In
  particular, a lattice simplex is uniquely determined up to
  isomorphism by its group $\Lambda_\Delta$.
\end{thm}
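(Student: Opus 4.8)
The plan is to check that the two assignments are mutually inverse by computing both round trips; bijectivity then follows formally, and the ``uniqueness up to isomorphism'' is simply injectivity of $\Delta\mapsto\Lambda_\Delta$. First I would record two preliminary remarks. One: the target of $\Delta\mapsto\Lambda_\Delta$ is really the set of finite subgroups $\Lambda$ of the subtorus $\{\mathbf x\in(\R/\Z)^{d+1}:\sum_i x_i=0\}$, since the condition $\sum_i\{x_i\}\in\Z$ (equivalently, the determinant-one condition defining $G_\Delta\subseteq\SL_{d+1}(\C)$) is built into the definition. Two: $\Lambda_\Delta$ depends only on the isomorphism class of the ordered simplex $\Delta$: if $\varphi=T+\mathbf b$ is an affine isomorphism with $\varphi(M)=M'$, then $\mathbf b=\varphi(\mathbf 0)\in M'$ and $T(M)=M'$, so the two defining conditions $\sum_i\{x_i\}\mathbf v_i\in M$ and $\sum_i\{x_i\}\in\Z$ are carried to the corresponding conditions for $\Delta'$. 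The map $\Lambda\mapsto\Delta_\Lambda$ is well defined because it is given by an explicit construction.

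The computational heart of the argument is the following observation. Given a $d$-dimensional lattice simplex $\Delta$ with ordered vertices $\mathbf v_0,\dots,\mathbf v_d$ in a rank-$d$ lattice $M$, the vectors $(\mathbf v_0,1),\dots,(\mathbf v_d,1)$ form an $\R$-basis of $M_\R\oplus\R$, so there is a unique linear isomorphism $\phi\colon\R^{d+1}\xrightarrow{\ \sim\ }M_\R\oplus\R$ with $\phi(\mathbf e_{i+1})=(\mathbf v_i,1)$. Then $\phi^{-1}(M\oplus\Z)=\pi^{-1}(\Lambda_\Delta)$: for $\mathbf x=(x_0,\dots,x_d)$ one has $\phi(\mathbf x)=\big(\sum_i x_i\mathbf v_i,\ \sum_i x_i\big)$, and writing $x_i=\{x_i\}+\lfloor x_i\rfloor$ shows $\sum_i x_i\mathbf v_i\in M\iff\sum_i\{x_i\}\mathbf v_i\in M$ and $\sum_i x_i\in\Z\iff\sum_i\{x_i\}\in\Z$, which is exactly the membership condition for $\Lambda_\Delta$. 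Since $\phi$ carries the affine hyperplane $\{\sum_i y_i=1\}$ onto $M_\R\times\{1\}$ and the simplex $\conv(\mathbf e_1,\dots,\mathbf e_{d+1})$ onto $\Delta\times\{1\}$, the displayed identity says precisely that $\phi$ restricts to an isomorphism of affine lattices $\pi^{-1}(\Lambda_\Delta)\cap\{\sum_i y_i=1\}\ \xrightarrow{\ \sim\ }\ (M\oplus\Z)\cap(M_\R\times\{1\})=M\times\{1\}$. Hence $\Delta_{\Lambda_\Delta}\cong\Delta\times\{1\}\cong\Delta$ as ordered lattice simplices, which is the first round trip.

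For the other round trip I would start with a finite subgroup $\Lambda$ of $\{\sum_i x_i=0\}$, set $M=\pi^{-1}(\Lambda)$, and compute $\Lambda_{\Delta_\Lambda}$ for $\Delta_\Lambda=\conv(\mathbf e_1,\dots,\mathbf e_{d+1})$ with affine lattice $M\cap H$, where $H=\{\sum_i y_i=1\}$. Taking $\mathbf e_1$ as base point and $M\cap H_0$ (with $H_0=\{\sum_i y_i=0\}$) as the associated linear lattice, the defining condition $\sum_i\{x_i\}(\mathbf e_{i+1}-\mathbf e_1,1)\in(M\cap H_0)\oplus\Z$ unwinds, by the same fractional-part bookkeeping, to: $\sum_i\{x_i\}\in\Z$ together with $(\{x_0\},\dots,\{x_d\})\in M$. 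Since $\Z^{d+1}\subseteq M$, the second condition is equivalent to $(x_0,\dots,x_d)\in\Lambda$, and for such $\mathbf x$ the first holds automatically because $\Lambda\subseteq\{\sum_i x_i=0\}$. Thus $\Lambda_{\Delta_\Lambda}=\Lambda$, and together with the first round trip this shows the two maps are mutually inverse.

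The routine parts are the fractional-part manipulations and tracking the hyperplane $H$ and its associated cone lattice. I expect the only slightly delicate point to be the conventions: $\Delta_\Lambda$ is presented inside $\R^{d+1}$ with an affine, not linear, ambient lattice, so one must be careful about what ``$(\mathbf v_i,1)\in M_\R\oplus\R$'' means in that case (this is handled by the base-point choice $\mathbf e_1$); and, relatedly, one should notice that the integrality $\sum_i\{x_i\}\in\Z$ needed to close the round trip $\Lambda\mapsto\Delta_\Lambda\mapsto\Lambda_{\Delta_\Lambda}$ is supplied exactly by the $\SL$/determinant-one restriction on $\Lambda$---without it the two maps would fail to be mutually inverse.
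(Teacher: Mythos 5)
Your proof is correct. The paper itself offers no argument here --- it simply declares the theorem ``easy to prove'' --- so there is nothing to compare against; what you have written is a complete justification along the lines the construction suggests. The two computational cores are both sound: the linear isomorphism $\phi(\mathbf e_{i+1})=(\mathbf v_i,1)$ satisfying $\phi^{-1}(M\oplus\Z)=\pi^{-1}(\Lambda_\Delta)$ handles the round trip $\Delta\mapsto\Lambda_\Delta\mapsto\Delta_{\Lambda_\Delta}$, and the base-point bookkeeping at $\mathbf e_1$ handles the other direction. You also catch a genuine imprecision in the paper's surrounding text: the asserted bijection cannot be onto \emph{all} finite subgroups of $(\R/\Z)^{d+1}$, since $\Lambda_\Delta$ always lies in the subtorus $\{\sum_i x_i=0\}$; for an arbitrary finite $\Lambda$ your computation gives $\Lambda_{\Delta_\Lambda}=\Lambda\cap\{\sum_i x_i=0\}$, so the restriction of the codomain is exactly what makes the two maps mutually inverse. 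One small point worth tightening if you write this up: in the well-definedness remark, the condition $\sum_i\fractional{x_i}\mathbf v_i\in M$ is not preserved by an affine map on its own, only in conjunction with $\sum_i\fractional{x_i}\in\Z$ (the translation part contributes $\bigl(\sum_i\fractional{x_i}\bigr)\mathbf b$); your phrasing treats the two conditions jointly, which is the right thing, but it deserves an explicit sentence.
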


\begin{exa}
  Let $G \subseteq\SL_3(\C)$ be the subgroup of order $4$ consisting
  of diagonal matrices with entries $1$ and $-1$. The singular locus
  of the quotient $\C^3/G$ consists of $3$ irreducible curves having
  one common point which comes from $1$-dimensional fixed point sets
  $(\C^3)^g$ for $3$ non-unit elements $g \in G$. Using methods of
  toric geometry one can identify all minimal resolutions of the
  non-isolated Gorenstein quotient singularities of $\C^3/G$ with all
  triangulations of the lattice triangle
  \begin{align*}
    \Delta = \{ (x_1,x_2) \in \R^2: x_1, x_2 \ge 0, x_1 + x_2 \leq 2\}
  \end{align*}
  whose $h^*$-polynomial has the form $1 + 3t$ and $G_\Delta=G$.
\end{exa}

We collect some basic observations concerning the group
$\Lambda_\Delta$ of a lattice simplex $\Delta$. Recall from the
introduction that the $h^*$-polynomial stays unchanged under pyramid
constructions. In contrast, the group $\Lambda_\Delta$ behaves as
follows.

\begin{prop}
  Let $\Delta$ be a $d$-dimensional lattice simplex. The following
  statements are equivalent:
  \begin{enumerate}
  \item $\Delta$ is a pyramid;
  \item there exists $i=0,\ldots,d$ such $x_i=0+\Z$ for all
    $\mathbf{x}\in \Lambda_\Delta$.
  \end{enumerate}
\end{prop}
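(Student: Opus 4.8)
The plan is to prove the equivalence of the two statements about pyramids by working directly with the definition of $\Lambda_\Delta$ and the description of the simplex $\Delta_\Lambda$ recovered from a subgroup $\Lambda$, using the fact (stated in the preceding theorem) that $\Delta \mapsto \Lambda_\Delta$ and $\Lambda \mapsto \Delta_\Lambda$ are inverse bijections.

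For the direction $(1) \Rightarrow (2)$, suppose $\Delta$ is a pyramid, say $\Delta = \Delta_0 * \{\mathbf{v}_d\}$ where, after reordering, $\mathbf{v}_d$ is the apex and $\Delta_0 = \conv(\mathbf{v}_0, \ldots, \mathbf{v}_{d-1})$ lies in an affine hyperplane not containing $\mathbf{v}_d$. The key point is that the vectors $(\mathbf{v}_0,1), \ldots, (\mathbf{v}_{d-1},1)$ span (over $\R$) a hyperplane in $M_\R \oplus \R$, and the apex vertex $(\mathbf{v}_d, 1)$ together with the lattice $M \oplus \Z$ can be chosen so that any lattice point of the fundamental parallelepiped $\Pi_\Delta$ has zero coefficient on $(\mathbf{v}_d,1)$. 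More precisely: if $\mathbf{x} \in \Lambda_\Delta$ then $\sum_{i=0}^d \fractional{x_i}(\mathbf{v}_i,1) \in M \oplus \Z$; projecting along the apex direction onto the hyperplane containing the base and using that the base $\Delta_0$ together with its affine lattice "sees" the full lattice in that hyperplane, one deduces $\fractional{x_d}$ must be an integer, hence $0$, i.e. $x_d = 0 + \Z$. I would phrase this cleanly by choosing coordinates so that $\mathbf{v}_d = \mathbf{0}$ is the origin and $M = M_0 \oplus \Z \mathbf{v}_d'$ splits compatibly with the pyramid structure; then the condition $\sum \fractional{x_i} \mathbf{v}_i \in M$ reads off componentwise and forces $\fractional{x_d} = 0$.

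For the direction $(2) \Rightarrow (1)$, suppose $x_i = 0 + \Z$ for all $\mathbf{x} \in \Lambda_\Delta$, and assume without loss of generality $i = d$. Then in the reconstruction $\Lambda \mapsto \Delta_\Lambda$, the lattice $M = \pi^{-1}(\Lambda_\Delta) \subseteq \R^{d+1}$ satisfies $M \subseteq (\R^d \times \{0\}) + \Z^{d+1}$, equivalently $M$ splits as $M' \oplus \Z \mathbf{e}_{d+1}$ where $M' = M \cap (\R^d \times \{0\})$ already contains $\mathbf{e}_1, \ldots, \mathbf{e}_d$ with the relevant finite index. Hence $\Delta_\Lambda = \conv(\mathbf{e}_1, \ldots, \mathbf{e}_{d+1})$ is the pyramid over $\conv(\mathbf{e}_1, \ldots, \mathbf{e}_d)$ with apex $\mathbf{e}_{d+1}$, since the last coordinate direction is "unimodular" — it contributes a single lattice layer. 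Because $\Delta \cong \Delta_{\Lambda_\Delta}$, the simplex $\Delta$ itself is a pyramid.

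The main obstacle I anticipate is bookkeeping: making precise the claim that "$x_i = 0$ for all $\mathbf{x} \in \Lambda_\Delta$" is exactly equivalent to the lattice $\pi^{-1}(\Lambda_\Delta)$ splitting off a unimodular $\Z$-summand in the $i$-th coordinate, and then translating that splitting back into the geometric pyramid statement via the explicit form of $\Delta_\Lambda$. One must be careful that the index $i$ ranges over $0, \ldots, d$ in the $\Lambda_\Delta$-picture but over $1, \ldots, d+1$ in the $\Z^{d+1}$-picture, and that "pyramid over a lower-dimensional simplex" means the apex vertex does not lie in the affine span of the others — which is automatic for a simplex, so the only real content is that removing the apex vertex and its coordinate leaves a genuine $(d-1)$-dimensional lattice simplex with the induced lattice. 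I expect this to reduce to a short linear-algebra argument over $\Z$ once the correspondence of the previous theorem is invoked, with the reconstruction map $\Lambda \mapsto \Delta_\Lambda$ doing most of the work.
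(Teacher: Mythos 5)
Your proposal is correct and follows essentially the same route as the paper: the direction $(1)\Rightarrow(2)$ is the immediate observation that the lattice splits off the apex direction (the paper simply calls this obvious), and for $(2)\Rightarrow(1)$ you use the reconstruction $\Lambda\mapsto\Delta_\Lambda$ and the splitting $\pi^{-1}(\Lambda_\Delta)=M'\oplus\Z\mathbf{e}_{d+1}$ exactly as the paper does. The bookkeeping concerns you raise are real but minor, and your sketch resolves them the same way the paper's proof does.
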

\begin{proof}
  The implication $(1)\Rightarrow(2)$ is obvious.
 
  For the other direction, let $x_d=0+\Z$ for all
  $\mathbf{x}\in\Lambda_\Delta$. It suffices to show that
  $\Delta_{\Lambda_\Delta}$ is a pyramid. Let
  $\pi:\R^{d+1}\to\left(\R/\Z\right)^{d+1}$ be the natural projection
  map. We have $M\coloneqq\pi^{-1}(\Lambda_\Delta)=M'\oplus\Z$ where
  $M'=\mathrm{pr}(M)$ for $\mathrm{pr}:\R^{d+1}\to\R^d$ the projection
  onto the first $d$ coordinates. From this it follows that
  $\Delta_{\Lambda_\Delta}$ is a pyramid with apex $\mathbf{e}_{d+1}$.
\end{proof}

By \cite[Corollary 3.11]{BeckRobins:ComputingTheContinuousDiscretely},
we have
\begin{prop}\label{prop:coeff_h_poly}
  Let $\Delta$ be a $d$-dimensional lattice simplex. Then the
  coefficients of the $h^*$-polynomial
  $h^*_\Delta=\sum_{k=0}^dh_k^*t^k$ can be computed as follows
  \begin{align*}
    h_k^*=|\{(x_0,\ldots,x_d)\in\Lambda_\Delta:\sum_{i=0}^d\fractional{x_i}=k\}|.
  \end{align*}
\end{prop}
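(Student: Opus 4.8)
The plan is to deduce the formula from the combinatorial description of the $h^*$-coefficients already recalled above: by \cite[Corollary 3.11]{BeckRobins:ComputingTheContinuousDiscretely} the integer $h_k^*$ equals the number of lattice points of $M\oplus\Z$ lying in the fundamental parallelepiped $\Pi_\Delta$ whose last coordinate equals $k$. It then suffices to transport this count along the explicit bijection $\Lambda_\Delta\to\Pi_\Delta\cap(M\oplus\Z)$, $\mathbf{x}\mapsto\sum_{i=0}^d\fractional{x_i}(\mathbf{v}_i,1)$, recorded in the Remark above. So the argument has three steps: (i) check that this map really is a bijection; (ii) observe that it intertwines the function $\mathbf{x}\mapsto\sum_{i=0}^d\fractional{x_i}$ on $\Lambda_\Delta$ with the ``last coordinate'' function on $\Pi_\Delta\cap(M\oplus\Z)$; (iii) combine (i), (ii) with the cited corollary.

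For step (i), write $\mathbf{v}_0,\ldots,\mathbf{v}_d$ for the vertices of $\Delta$. Since $\Delta$ is a simplex, the vectors $(\mathbf{v}_0,1),\ldots,(\mathbf{v}_d,1)$ form a basis of $M_\R\oplus\R$, so every point of $M_\R\oplus\R$ has a unique expression $\sum_{i=0}^d\lambda_i(\mathbf{v}_i,1)$, and such a point lies in $\Pi_\Delta$ exactly when $0\le\lambda_i<1$ for all $i$. Hence $\mathbf{x}=(x_0,\ldots,x_d)\mapsto\sum_{i=0}^d\fractional{x_i}(\mathbf{v}_i,1)$ lands in $\Pi_\Delta$; it lands in $M\oplus\Z$ precisely by the defining condition of $\Lambda_\Delta$; and the uniqueness of the expansion with coefficients in $[0,1)$ shows that reading off those coefficients is a two-sided inverse. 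So the map is the asserted bijection.

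For step (ii), the last coordinate of each vector $(\mathbf{v}_i,1)$ is $1$, so the last coordinate of $\sum_{i=0}^d\fractional{x_i}(\mathbf{v}_i,1)$ is simply $\sum_{i=0}^d\fractional{x_i}$; in particular this sum is an integer for every $\mathbf{x}\in\Lambda_\Delta$, consistent with $h_k^*$ being indexed by integers $k$. Therefore, for each $k$, the bijection of step (i) restricts to a bijection between $\{\mathbf{x}\in\Lambda_\Delta:\sum_{i=0}^d\fractional{x_i}=k\}$ and the set of lattice points of $\Pi_\Delta\cap(M\oplus\Z)$ with last coordinate $k$, and step (iii) is immediate. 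There is no real obstacle in this proof: the only point needing a word of care is the uniqueness of the coefficient expansion, which is exactly where one uses that $\Delta$ is a simplex, and granting that, step (ii) is a one-line computation.
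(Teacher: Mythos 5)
Your argument is correct and is exactly the one the paper intends: it combines the cited Corollary 3.11 (counting lattice points of $\Pi_\Delta$ by last coordinate) with the bijection $\Lambda_\Delta\to\Pi_\Delta\cap(M\oplus\Z)$ recorded in the preceding Remark, observing that the last coordinate of $\sum_{i=0}^d\fractional{x_i}(\mathbf{v}_i,1)$ is $\sum_{i=0}^d\fractional{x_i}$. The paper leaves these steps implicit; you have simply written them out, including the one point that genuinely needs care (uniqueness of the coefficient expansion, which uses that $\Delta$ is a simplex).
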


\section{The case \texorpdfstring{$1<k<(d+1)/2$}{1<k<(d+1)/2}}
\label{sec:k_in_between}

Consider a $d$-dimension lattice simplex $\Delta$ with $h^*$-binomial
of degree $1<k<(d+1)/2$ which is not a pyramid over a
lower-dimensional simplex. Let $\mathbf{v}_0,\ldots,\mathbf{v}_d$ be
the vertices of $\Delta$. Let
$\mathbf{0}\neq\mathbf{x}=(x_0,\ldots,x_d)\in \Lambda_\Delta$.  We can
uniquely express $\fractional{x_i}=a_i/n$ for a positive integer $n$
and non-negative integers $a_i<n$ subject to the condition
$\gcd(a_0,\ldots,a_d,n)=1$.  By rearranging the coordinates, we may
assume that
\begin{align*}
  a_0\cdots a_l\neq0\qquad\text{and}\qquad a_{l+1}=\ldots=a_d=0
\end{align*}
for $l=0,\ldots,d$.

\begin{prop}\label{prop:number_of_zeros}
  $l=2k$; in other words: the number of nonzero coordinates $a_i/n$ is
  a constant.
\end{prop}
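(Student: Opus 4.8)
The plan is to exploit two features of the situation: that $h^*_\Delta$ is a binomial (so that the age-function $\alpha$, via Proposition~\ref{prop:coeff_h_poly}, takes only two values on $\Lambda_\Delta$, namely $0$ and $k$), and the symmetry $\mathbf{x}\mapsto -\mathbf{x}$ of the group $\Lambda_\Delta$. First I would record the basic dichotomy: for every $\mathbf{0}\neq\mathbf{x}\in\Lambda_\Delta$ one has $\sum_{i=0}^d\fractional{x_i}=k$, and likewise $\sum_{i=0}^d\fractional{-x_i}=k$. Now $\fractional{x_i}+\fractional{-x_i}$ equals $1$ whenever $x_i\neq 0+\Z$ and equals $0$ whenever $x_i=0+\Z$. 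Writing $l+1$ for the number of nonzero coordinates of the chosen $\mathbf{x}$ (the notation of the excerpt), summing the two relations gives
\begin{align*}
  2k=\sum_{i=0}^d\big(\fractional{x_i}+\fractional{-x_i}\big)=l+1,
\end{align*}
so the number of nonzero coordinates of \emph{that particular} $\mathbf{x}$ is $2k$. The content of the Proposition is that this count is the same for every nonzero element of $\Lambda_\Delta$, which the argument just given already delivers, since nothing in it depended on the choice of $\mathbf{x}$: for an arbitrary $\mathbf{0}\neq\mathbf{y}=(y_0,\ldots,y_d)\in\Lambda_\Delta$ the same computation yields $\omega(\mathbf{y})=|\{i:y_i\neq 0+\Z\}|=2k$.

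The only point that needs a word of care is that $\mathbf{y}\neq\mathbf{0}$ forces $\mathbf{y}$ to have \emph{some} nonzero coordinate, so that the age $\sum\fractional{y_i}$ is a positive integer and hence equal to $k$ rather than to $0$; this is immediate. I would also note that $-\mathbf{y}\neq\mathbf{0}$ for the same reason, so the relation $\sum\fractional{-y_i}=k$ is legitimate. One should remark that the hypothesis $1<k<(d+1)/2$ is not actually needed for the equality $\omega\equiv 2k$ itself — only the binomiality of $h^*_\Delta$ and $k\ge 1$ enter — though the strict inequalities guarantee $0<2k\le d$ so that the statement is non-vacuous (indeed $\Delta$ not being a lattice pyramid rules out the degenerate possibilities).

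I do not expect a genuine obstacle here; the proposition is essentially the observation that a constant-age code is a constant-weight code, specialized to $\Lambda_\Delta$, together with the computation $p\,\omega(\mathbf{x})=2\alpha(\mathbf{x})$ alluded to in the introduction (here $\alpha\equiv k$, so $\omega\equiv 2k$, independently of the eventual prime $p$). If I wanted to present it without invoking $\pm$-symmetry explicitly, I could instead pair the bijection $\Lambda_\Delta\to\Pi_\Delta\cap(M\oplus\Z)$ with the involution on the fundamental parallelepiped sending a lattice point to its "complement," but the symmetry argument above is the cleanest route and is what I would write up.
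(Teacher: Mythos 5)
Your proof is correct and is essentially the paper's own argument: apply Proposition~\ref{prop:coeff_h_poly} to both $\mathbf{x}$ and $-\mathbf{x}$ and use $\fractional{x_i}+\fractional{-x_i}\in\{0,1\}$ to get that the number of nonzero coordinates equals $2k$, uniformly in $\mathbf{x}$. Your bookkeeping is in fact slightly more careful than the paper's, which writes the count of nonzero coordinates as $l$ even though its indexing convention $a_0\cdots a_l\neq 0$ makes that count $l+1$; the substantive conclusion (constant weight $2k$) is the same in both.
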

\begin{proof}
  We have $\mathbf{0}\neq
  -\mathbf{x}=((n-a_0)/n+\Z,\ldots,(n-a_d)/n+\Z)\in\Lambda_\Delta$. By
  Proposition \ref{prop:coeff_h_poly}, we obtain
  \begin{align*}
    2k=\sum_{i=0}^d\fractional{\frac{n-a_i}{n}}+\sum_{i=0}^d\frac{a_i}{n}=\sum_{i=0}^l\frac{n-a_i}{n}+\sum_{i=0}^l\frac{a_i}{n}=l.
  \end{align*}
\end{proof}

\begin{cor}\label{cor:a_i_coprime_to_n}
  $\gcd(a_i,n)=1$ for $i=0,\ldots,d$.
\end{cor}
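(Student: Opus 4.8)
The plan is to argue by contradiction, using only two facts: that $\Lambda_\Delta$ is a finite group (hence closed under integer multiples), and that, by Proposition \ref{prop:number_of_zeros}, every nonzero element of $\Lambda_\Delta$ has exactly $2k$ nonzero coordinates. It suffices to treat an index $i_0$ with $a_{i_0}\neq 0$ (for the remaining indices $a_i=0$ and there is nothing to prove), so I fix such an $i_0$, set $e:=\gcd(a_{i_0},n)$, and assume for contradiction that $e>1$.

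Since $e\mid n$, I would form the element $\mathbf y:=(n/e)\,\mathbf x\in\Lambda_\Delta$, whose $i$-th coordinate is $(n/e)x_i\equiv a_i/e\pmod\Z$, that is $\fractional{y_i}=\fractional{a_i/e}$. The first thing to check is that $\mathbf y\neq\mathbf 0$: if it were, then $e\mid a_i$ for every $i$, and combined with $e\mid n$ this would give $e\mid\gcd(a_0,\ldots,a_d,n)=1$, contradicting $e>1$. The second thing is to bound the number of nonzero coordinates of $\mathbf y$ from above: the $i_0$-th coordinate is $\fractional{a_{i_0}/e}=0$ because $e\mid a_{i_0}$, and every coordinate for which $a_i=0$ is clearly $0$ as well; hence the nonzero coordinates of $\mathbf y$ are confined to the set of indices $i\neq i_0$ with $a_i\neq 0$, of which there are exactly $2k-1$ by Proposition \ref{prop:number_of_zeros}. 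Thus $\mathbf y$ is a nonzero element of $\Lambda_\Delta$ with at most $2k-1$ nonzero coordinates, contradicting Proposition \ref{prop:number_of_zeros}. Hence $e=1$, and as $i_0$ was arbitrary the corollary follows.

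The one genuinely thoughtful step is the choice of auxiliary element: multiplication of $\mathbf x$ by $n/\gcd(a_{i_0},n)$ is precisely what annihilates the $i_0$-th coordinate, while the primitivity normalization $\gcd(a_0,\ldots,a_d,n)=1$ is exactly what prevents it from annihilating \emph{all} of them. I expect the only (minor) obstacle to be noticing that the contradiction has to be extracted from the constancy of the \emph{number} of nonzero coordinates rather than from the constancy of the age: the age of $\mathbf y$ is only forced to be at most $2k-1$, which is $\geq k$ whenever $k>1$, and so would not by itself be contradictory.
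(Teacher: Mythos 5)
Your proof is correct and follows essentially the same route as the paper: multiply $\mathbf{x}$ by $n/\gcd(a_{i_0},n)$ to kill the $i_0$-th coordinate, use the primitivity condition $\gcd(a_0,\ldots,a_d,n)=1$ to see the result is nonzero, and contradict the constancy of the number of nonzero coordinates from Proposition \ref{prop:number_of_zeros}. You actually spell out the nonvanishing check more carefully than the paper does, and your closing remark about why the count of nonzero coordinates (rather than the age) must supply the contradiction is apt.
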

\begin{proof}
  Assume by contradiction that $\gcd(a_0,n)=q$. Then
  \begin{align*}
    \mathbf{0}\neq\frac{n}{q}\cdot\mathbf{x}=\left(\frac{a_0}{q}+\Z,\ldots,\frac{a_d}{q}+\Z\right)\in\Lambda_\Delta.
  \end{align*}
  But $a_0/q+\Z=0+\Z$ and there are at most $l-1$ nonzero coefficients
  contradicting the previous proposition.
\end{proof}

Let $\mathbf{0}\neq\mathbf{y}=(y_0,\ldots,y_d)\in
\Lambda_\Delta$. Like above, we can write $y_i=b_i/m$ for a positive
integer $m$ and non-negative integers $b_i<m$ coprime to $m$.

\begin{prop}\label{prop:Denominator_coeff_are_the_same}
  $m=n$.
\end{prop}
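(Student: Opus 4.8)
The plan is to recast the statement as saying that every nonzero element of the finite abelian group $\Lambda_\Delta\subseteq(\R/\Z)^{d+1}$ has the same order, and then to play the supports of different elements against one another using Corollary~\ref{cor:a_i_coprime_to_n}. First note that if $\mathbf{x}=(a_0/n+\Z,\dots,a_d/n+\Z)$ is written as in the statement (so $\gcd(a_0,\dots,a_d,n)=1$ and $0\le a_i<n$), then $n$ is exactly the order of $\mathbf{x}$: this order is $\mathrm{lcm}_i\bigl(n/\gcd(a_i,n)\bigr)$, which equals $n$ because $\gcd_i\gcd(a_i,n)=\gcd(a_0,\dots,a_d,n)=1$. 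The same remark applied to an arbitrary nonzero $\mathbf{w}\in\Lambda_\Delta$ shows that the common denominator $N$ in its canonical form is $\mathrm{ord}(\mathbf{w})$, so Corollary~\ref{cor:a_i_coprime_to_n} says precisely that every nonzero coordinate of $\mathbf{w}$ is, in lowest terms, a fraction with denominator $\mathrm{ord}(\mathbf{w})$; and by Proposition~\ref{prop:number_of_zeros} the support $\supp(\mathbf{w})=\{i:w_i\neq0\}$ has cardinality $2k$ for every nonzero $\mathbf{w}$. Hence it suffices to prove that all nonzero elements of $\Lambda_\Delta$ have the same order, and in particular the proposition reduces to $\mathrm{ord}(\mathbf{x})=\mathrm{ord}(\mathbf{y})$.

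The core step treats the case $\supp(\mathbf{x})\neq\supp(\mathbf{y})$. Then $\mathbf{x}+\mathbf{y}\neq\mathbf{0}$, since $\mathbf{x}+\mathbf{y}=\mathbf{0}$ would force $\mathbf{y}=-\mathbf{x}$ and equal supports. As $\supp(\mathbf{x})$ and $\supp(\mathbf{y})$ are distinct sets of the same size $2k$, both $\supp(\mathbf{x})\setminus\supp(\mathbf{y})$ and $\supp(\mathbf{y})\setminus\supp(\mathbf{x})$ are nonempty; choose $i$ in the first and $j$ in the second. Then $(\mathbf{x}+\mathbf{y})_i=x_i$ and $(\mathbf{x}+\mathbf{y})_j=y_j$ are nonzero coordinates of $\mathbf{x}+\mathbf{y}$, so by Corollary~\ref{cor:a_i_coprime_to_n} applied to $\mathbf{x}+\mathbf{y}$ they have a common lowest-terms denominator $N=\mathrm{ord}(\mathbf{x}+\mathbf{y})$; but applying the same corollary to $\mathbf{x}$ and to $\mathbf{y}$ gives that $x_i$ has denominator $n$ and $y_j$ has denominator $m$. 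Therefore $n=N=m$.

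For the remaining case $\supp(\mathbf{x})=\supp(\mathbf{y})=:S$ we invoke the hypothesis that $\Delta$ is not a pyramid over a lower-dimensional simplex. Since $k<(d+1)/2$ we have $|S|=2k<d+1$, so there is an index $i_0\notin S$; by the characterisation of pyramids (a lattice simplex is a pyramid if and only if some coordinate vanishes on all of $\Lambda_\Delta$), the $i_0$-th coordinate does not vanish identically on $\Lambda_\Delta$, so some $\mathbf{z}\in\Lambda_\Delta$ has $z_{i_0}\neq0$; then $\mathbf{z}\neq\mathbf{0}$ and $\supp(\mathbf{z})\neq S$. Applying the core step to the pairs $(\mathbf{x},\mathbf{z})$ and $(\mathbf{y},\mathbf{z})$ yields $n=\mathrm{ord}(\mathbf{z})=m$, which finishes the proof.

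I expect the equal-support case to be the only real obstacle: the ``mixed denominators in $\mathbf{x}+\mathbf{y}$'' argument bites only when the two supports differ, so one must produce a third element with a support different from $S$, and this is exactly what the non-pyramid hypothesis supplies — indeed that hypothesis is genuinely necessary here, since otherwise a cyclic $\Lambda_\Delta$ containing elements of different orders is perfectly consistent with all the other constraints.
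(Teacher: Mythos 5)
Your proof is correct and follows essentially the same route as the paper: the same case split on whether $\supp(\mathbf{x})=\supp(\mathbf{y})$, the same use of $\mathbf{x}+\mathbf{y}$ in the unequal-support case, and the same appeal to the non-pyramid hypothesis to produce a third element $\mathbf{z}$ with different support in the equal-support case. The only (harmless) variation is in how you extract $n=m$ from $\mathbf{x}+\mathbf{y}$: the paper locates a cancelling index $i\in\supp(\mathbf{x})\cap\supp(\mathbf{y})$ with $a_i/n+b_i/m=1$ and concludes from lowest terms, whereas you compare lowest-terms denominators at indices in the symmetric difference of the supports via Corollary~\ref{cor:a_i_coprime_to_n} applied to the sum.
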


To be able to easily formulate the proof, we introduce the following
notion.
\begin{defi}
  Let $\mathbf{z}=(z_0,\ldots,z_d)\in \Lambda_\Delta$. The
  \emph{support} of $\mathbf{z}$ is given by
  \[
  \supp(\mathbf{z})\coloneqq\{i=0,\ldots,d:y_i\neq0+\Z\}.
  \]
\end{defi}

\begin{proof}[Proof of Proposition
  \ref{prop:Denominator_coeff_are_the_same}]
  We distinguish two cases.

  Assume that $\supp(\mathbf{x})\neq\supp(\mathbf{y})$. Consider
  \begin{align*}
    \mathbf{0}\neq\mathbf{x}+\mathbf{y}=\left(\left(\frac{a_0}{n}+\frac{b_0}{m}\right)+\Z,\ldots,\left(\frac{a_d}{n}+\frac{b_d}{m}\right)+\Z\right)\in\Lambda_\Delta.
  \end{align*}
  By Proposition \ref{prop:number_of_zeros}, there exists
  $i=0,\ldots,d$ such that $(a_i/n+b_i/m)+\Z=0+\Z$, \ie
  \begin{align*}
    \frac{a_i}{n}+\frac{b_i}{m}=1.
  \end{align*}
  The assertion follows by the fact that $(a_i,n)=1=(b_i,m)$.

  Assume that $\supp(\mathbf{x})=\supp(\mathbf{y})$. Since for all
  $\mathbf{0}\neq\mathbf{z}\in \Lambda_\Delta$,
  $|\supp(\mathbf{z})|=2k<d+1$, there exists $i=0,\ldots,d$ such that
  $z_i=0$. Since $\Delta$ is not a pyramid there exists
  $\mathbf{0}\neq\mathbf{z}\in \Lambda_\Delta$ with
  $\supp(\mathbf{x})\neq\supp(\mathbf{z})$. We may write $z_i=c_i/l$
  for a positive integer $l$ and non-negative integers $c_i<l$ coprime
  to $l$. By the same argument as above applied to $\mathbf{x}$ and
  $\mathbf{z}$, it follows that $l=n$. Applying this argument once
  again to $\mathbf{z}$ and $\mathbf{y}$ yields $l=m$. Hence $n=l=m$.
\end{proof}

\begin{cor}\label{cor:n_is_prime}
  $n=p$ is a prime number.
\end{cor}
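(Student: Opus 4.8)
The plan is to read off a single structural fact from the three results just proved and then invoke elementary group theory. Fix a nonzero $\mathbf{x}=(x_0,\ldots,x_d)\in\Lambda_\Delta$ and write $\fractional{x_i}=a_i/n$ with $\gcd(a_0,\ldots,a_d,n)=1$ as in the section. The key observation is that $n$ is exactly the order of $\mathbf{x}$ in the torus $(\R/\Z)^{d+1}$: indeed $N\mathbf{x}=\mathbf{0}$ iff $n\mid Na_i$ for all $i$, and since $\gcd(a_0,\ldots,a_d,n)=1$ the least such $N$ is $n$. By Proposition \ref{prop:Denominator_coeff_are_the_same} this integer $n$ does not depend on the choice of $\mathbf{x}$, so $\Lambda_\Delta$ is a (nontrivial, since $h_k^*>0$) finite abelian group in which \emph{every} nonzero element has order exactly $n$.

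Next I would rule out that $n$ is composite. Suppose $p$ is a prime dividing $n$ and again take any nonzero $\mathbf{x}=(a_0/n+\Z,\ldots,a_d/n+\Z)\in\Lambda_\Delta$. Then
\[
  \tfrac{n}{p}\,\mathbf{x}=\left(\tfrac{a_0}{p}+\Z,\ldots,\tfrac{a_d}{p}+\Z\right)\in\Lambda_\Delta .
\]
By Corollary \ref{cor:a_i_coprime_to_n} each nonzero $a_i$ is coprime to $n$, hence coprime to $p$, so after reduction to lowest terms the common denominator of $\tfrac{n}{p}\mathbf{x}$ is precisely $p$; in particular $\tfrac{n}{p}\mathbf{x}\neq\mathbf{0}$. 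Applying Proposition \ref{prop:Denominator_coeff_are_the_same} to this element forces $p=n$, so $n$ is prime.

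There is essentially no serious obstacle here; the single point that needs a little care is verifying that dividing $\mathbf{x}$ by $p$ does not collapse the denominator below $p$ (equivalently, that $\tfrac{n}{p}\mathbf{x}$ is genuinely nonzero), which is exactly the content of Corollary \ref{cor:a_i_coprime_to_n}. Alternatively one can phrase the argument purely group-theoretically: a nontrivial finite abelian group all of whose nonzero elements share a common order $n$ must be an elementary abelian $p$-group, since if $n=ab$ with $1<a<n$ then $a\mathbf{x}$ has order $b<n$ for any element $\mathbf{x}$ of order $n$ — a contradiction unless $n=p$.
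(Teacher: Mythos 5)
Your proof is correct, but it takes a genuinely different and in fact shorter route than the paper's. The paper argues by contradiction with an arbitrary nontrivial divisor of $n$: it invokes the non-pyramid hypothesis once more to produce a second element $\mathbf{z}\in\Lambda_\Delta$ with $\supp(\mathbf{x})\neq\supp(\mathbf{z})$, uses Proposition \ref{prop:number_of_zeros} to locate an index $i$ with $a_i/n+c_i/n=1$, and then builds an auxiliary element $\mathbf{w}$ (a suitable combination of $\mathbf{x}$ and $\mathbf{z}$) whose $i$-th coordinate is the divisor over $n$, contradicting Corollary \ref{cor:a_i_coprime_to_n}. You instead observe that the canonical denominator of a nonzero element is exactly its order in the torus (because $\gcd(a_0,\ldots,a_d,n)=1$), so Proposition \ref{prop:Denominator_coeff_are_the_same} already says that every nonzero element of $\Lambda_\Delta$ has the same order $n$; the one point needing care --- that $\tfrac{n}{p}\mathbf{x}$ does not vanish and has canonical denominator exactly $p$ --- is exactly what Corollary \ref{cor:a_i_coprime_to_n} supplies, and comparing denominators forces $p=n$. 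Your argument buys economy: it needs neither the non-pyramid hypothesis nor a second element $\mathbf{z}$ at this stage, and it isolates the purely group-theoretic content (a nontrivial finite abelian group in which all nonzero elements share a common order is an elementary abelian $p$-group). The paper's construction is heavier but stays entirely inside the coordinate bookkeeping already set up, reusing the same contradiction template as its proof of Corollary \ref{cor:a_i_coprime_to_n}. Both are valid; yours is the cleaner derivation from the two preceding statements.
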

\begin{proof}
  We may assume $n\neq2$. Assume by contradiction that $n$ is not
  prime, \ie there is a nontrivial divisor $d\mid n$.  Since $\Delta$
  is not a pyramid there exists $\mathbf{0}\neq\mathbf{z}\in
  \Lambda_\Delta$ with $\supp(\mathbf{x})\neq\supp(\mathbf{z})$. We
  can write $z_i=c_i/n$ for non-negative integers $c_i<n$ coprime to
  $n$.  By Proposition \ref{prop:number_of_zeros}, there exists
  $i\in\supp(\mathbf{x})\cap\supp(\mathbf{z})$ such that
  $a_i/n+c_i/n=1$.  Let $a_i'$ be an integer such that
  $a_i'a_i=1\in\Z/n\Z$. Consider
  $\mathbf{w}\coloneqq(d-c_i)a_i'\cdot\mathbf{x}+\mathbf{z}\in\Lambda_\Delta$. The
  $i$th coordinate of $\mathbf{w}$ is equal to $d/n+\Z$. Contradiction
  to Corollary \ref{cor:a_i_coprime_to_n}
\end{proof}

Our purpose is to relate the classification of $d$-dimensional lattice
polytopes $\Delta$ with an $h^*$-binomial of degree $1<k<(d+1)/2$ to
coding theory using linear codes $L_\Delta \subseteq \F_p^{d+1}$.
\begin{defi}\label{def:Hamming_weight_AND_constant_weight}
  A linear subspace $L\subseteq\mathds{F}_p^d$ of dimension $k$ will
  be called a \emph{linear code} of \emph{dimension} $k$ and
  \emph{block length} $d$.  Let $\mathbf{x}$ be a vector in
  $\mathds{F}_p^{d+1}$. The \emph{(Hamming) weight} of $\mathbf{x}$ is
  the number of nonzero coordinates in $\mathbf{x}$, \ie
  $\omega(\mathbf{x})=|\{i=1,\ldots,d+1:x_i\neq0\}|$. A linear subspace
  $L\subseteq\mathds{F}_p^{d+1}$ has \emph{constant weight} if every
  nonzero vector has the same weight.
\end{defi}
\begin{defi}
  Let $L\subseteq\F_p^d$ be a linear code. For every $\mathbf{x}\in
  L$, let $x_i$ be the unique integer representative between $0$ and
  $p-1$ of the $i$th coordinate of $\mathbf{x}$. Then the \emph{age}
  of $\mathbf{x}$ is given by
  \begin{align*}
    \age(\mathbf{x})=\sum_{i=1}^dx_i\in\N.
  \end{align*}
  Let $l\in\N$. We say that $L$ has \emph{constant age $l$}, if for
  all $\mathbf{0}\neq\mathbf{x}\in L$: $\age(\mathbf{x})=l$.
\end{defi}
\begin{rem}
  The name \enquote{age} is inspired by a definition of Ito and Reid
  (see \cite[Theorem 1.3]{Reid:McKay}).
\end{rem}

We have the following connection between age and weight:
\begin{prop}\label{prop:const_age_implies_const_weight}
  If a linear code $L\subseteq\F_p^d$ has constant age, then it has
  constant weight and for all $\mathbf{0}\neq\mathbf{x}\in L$:
  $2\age(\mathbf{x})=p\omega(\mathbf{x})$.
\end{prop}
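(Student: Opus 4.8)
The plan is to prove the identity $2\age(\mathbf{x}) = p\,\omega(\mathbf{x})$ for every nonzero $\mathbf{x}$ in a constant-age code $L \subseteq \F_p^d$, since constant weight then follows immediately: if the age is the constant $l$, then $\omega(\mathbf{x}) = 2l/p$ is forced to be the same for all nonzero $\mathbf{x}$. So everything reduces to the displayed relation between age and weight for a single vector.

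First I would exploit the linear structure: since $L$ is a subspace, together with $\mathbf{x}$ it also contains $-\mathbf{x}$. Writing $\mathbf{x} = (\overline{x_1}, \ldots, \overline{x_d})$ with representatives $0 \le x_i < p$, the $i$-th representative of $-\mathbf{x}$ is $p - x_i$ when $x_i \neq 0$ and $0$ when $x_i = 0$. Hence, summing over all coordinates,
\begin{align*}
  \age(\mathbf{x}) + \age(-\mathbf{x}) = \sum_{i=1}^d x_i + \sum_{i=1}^d (p - x_i)[x_i \neq 0] = p\,|\{i : x_i \neq 0\}| = p\,\omega(\mathbf{x}).
\end{align*}
Now I invoke the hypothesis: $L$ has constant age $l$, and since $\mathbf{x} \neq 0$ implies $-\mathbf{x} \neq 0$, both $\age(\mathbf{x})$ and $\age(-\mathbf{x})$ equal $l$. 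Therefore $2l = p\,\omega(\mathbf{x})$, i.e. $2\age(\mathbf{x}) = p\,\omega(\mathbf{x})$, as claimed.

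For the constant-weight conclusion, observe that the equation $2l = p\,\omega(\mathbf{x})$ determines $\omega(\mathbf{x}) = 2l/p$ independently of $\mathbf{x}$, so $\omega$ is constant on $L \setminus \{0\}$; in particular $p \mid 2l$, which for odd $p$ means $p \mid l$. This is essentially the same computation already sketched in the introduction, just carried out carefully with integer representatives.

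I do not anticipate a serious obstacle here — the proof is a one-line pairing argument. The only point requiring a little care is the bookkeeping of representatives: one must note that the representative of the $i$-th coordinate of $-\mathbf{x}$ is $p - x_i$ precisely when $x_i \neq 0$, and $0$ otherwise, so that the cross terms $x_i + (p - x_i) = p$ contribute exactly once per nonzero coordinate and zero coordinates contribute nothing. Making that case distinction explicit is the whole content of the argument.
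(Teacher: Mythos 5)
Your proposal is correct and follows exactly the paper's argument: pair $\mathbf{x}$ with $-\mathbf{x}$, note that the integer representatives satisfy $x_i + (p - x_i) = p$ on the nonzero coordinates, and conclude $\age(\mathbf{x}) + \age(-\mathbf{x}) = p\,\omega(\mathbf{x})$, which together with constant age gives the identity and hence constant weight. The extra care you take with the zero-coordinate case is a welcome but minor refinement of the same computation.
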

\begin{proof}
  Let $\mathbf{0}\neq\mathbf{x}\in L$. Then
  \begin{align*}
    \age(\mathbf{x})=\age(-\mathbf{x})=\sum_{\substack{i=1\\x_i\neq0}}^d(p-x_i)=p\omega(\mathbf{x})-\age(\mathbf{x}).
 \end{align*}
\end{proof}
\begin{rem}
  Constant weight does not imply constant age. Indeed, a linear
  subspace $L\subseteq\F_p^d$ of dimension $1$ has constant weight but
  not constant age in general.
\end{rem}

\begin{thm}\label{thm:Props_of_linear_code_L_Delta}
  Let $\Delta\subseteq\R^d$ be a $d$-dimensional lattice simplex with
  $h^*$-binomial $h_\Delta^*=1+h_k^*t^k$ of degree $1<k<(d+1)/2$.
  Assume that $\Delta$ is not a pyramid over a lower-dimensional
  simplex. Then there exists a prime number $p$ such that
  $\Lambda_\Delta$ can be identified with a linear code
  $L_\Delta\subseteq\F_p^{d+1}$ of constant age $kp$. In particular,
  $L_\Delta$ has constant weight $2k$ and
  $\vol(\Delta)=1+h_k^*=|L_\Delta|=p^r$ where $r=\dim L_\Delta$.
\end{thm}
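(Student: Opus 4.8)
The plan is to assemble the structural results of this section into the stated identification. Since $h_\Delta^* = 1 + h_k^* t^k$ with $h_k^* \neq 0$, the group $\Lambda_\Delta$ contains a nonzero element $\mathbf{x}$, fixed as at the beginning of the section. By Corollary~\ref{cor:n_is_prime} the common denominator $n$ attached to $\mathbf{x}$ is a prime $p$, and by Proposition~\ref{prop:Denominator_coeff_are_the_same} every other nonzero element of $\Lambda_\Delta$ has the same denominator $n = p$. Hence each nonzero $\mathbf{z} = (z_0,\ldots,z_d) \in \Lambda_\Delta$ satisfies $\fractional{z_i} = c_i/p$ with integers $0 \le c_i < p$, so $\Lambda_\Delta$ is a subgroup of $\left(\tfrac1p\Z/\Z\right)^{d+1}$.

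I would then use the group isomorphism $\tfrac1p\Z/\Z \cong \F_p$ sending $\tfrac{a}{p}+\Z$ to $\overline{a}$, to identify $\left(\tfrac1p\Z/\Z\right)^{d+1}$ with $\F_p^{d+1}$ and thereby $\Lambda_\Delta$ with a subgroup $L_\Delta \subseteq \F_p^{d+1}$. Because $p$ is prime, any additive subgroup of $\F_p^{d+1}$ is closed under multiplication by $0,1,\ldots,p-1$ (iterated addition), hence is an $\F_p$-linear subspace; so $L_\Delta$ is a linear code, say of dimension $r \coloneqq \dim_{\F_p} L_\Delta$. Under this identification a nonzero $\mathbf{z}$ with $\fractional{z_i}=c_i/p$ corresponds to the codeword $(\overline{c_0},\ldots,\overline{c_d})$, whose age is $\sum_{i=0}^d c_i$.

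Next I would read off the age, the weight and the volume. For nonzero $\mathbf{z}$ one has $\sum_{i=0}^d \fractional{z_i} = \tfrac1p \sum_{i=0}^d c_i = \tfrac1p \age(\mathbf{z})$, identifying $\mathbf{z}$ with its image in $L_\Delta$. By Proposition~\ref{prop:coeff_h_poly}, $h_j^*$ is the number of $\mathbf{z} \in \Lambda_\Delta$ with $\sum_i \fractional{z_i} = j$; since $h_\Delta^* = 1 + h_k^* t^k$, every nonzero $\mathbf{z}$ satisfies $\sum_i \fractional{z_i} = k$, i.e. $\age(\mathbf{z}) = kp$. Thus $L_\Delta$ has constant age $kp$, and Proposition~\ref{prop:const_age_implies_const_weight} gives constant weight $\omega$ with $2kp = p\omega$, so $\omega = 2k$ (consistent with Proposition~\ref{prop:number_of_zeros}). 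Finally $|L_\Delta| = |\Lambda_\Delta| = |\Pi_\Delta \cap (M\oplus\Z)| = h_\Delta^*(1) = \vol(\Delta) = 1 + h_k^*$ by the remarks preceding Proposition~\ref{prop:coeff_h_poly}, while $|L_\Delta| = p^r$ since $L_\Delta$ is an $r$-dimensional $\F_p$-vector space; combining these yields $\vol(\Delta) = 1 + h_k^* = |L_\Delta| = p^r$.

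No serious obstacle remains: the delicate part — that all reduced denominators of nonzero elements of $\Lambda_\Delta$ coincide and that their common value is prime — is precisely the content of Propositions~\ref{prop:number_of_zeros}--\ref{prop:Denominator_coeff_are_the_same} and Corollaries~\ref{cor:a_i_coprime_to_n}, \ref{cor:n_is_prime}, all already established. The only points needing a word of care are the \emph{uniformity} of the prime $p$ over all nonzero elements (which is exactly the combination of Proposition~\ref{prop:Denominator_coeff_are_the_same} with Corollary~\ref{cor:n_is_prime}) and the elementary remark that an additive subgroup of $\F_p^{d+1}$ is automatically $\F_p$-linear.
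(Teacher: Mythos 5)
Your proof is correct and follows essentially the same route as the paper: it assembles Propositions~\ref{prop:number_of_zeros}--\ref{cor:n_is_prime} to identify $\Lambda_\Delta$ with a linear code over $\F_p$, then reads off the constant age from Proposition~\ref{prop:coeff_h_poly}, the weight from Proposition~\ref{prop:const_age_implies_const_weight}, and the volume from $h^*_\Delta(1)=\vol(\Delta)$. The only difference is that you spell out the identification $\tfrac1p\Z/\Z\cong\F_p$ and the $\F_p$-linearity of the additive subgroup explicitly, which the paper leaves implicit.
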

\begin{proof}
  By the statements \ref{prop:number_of_zeros} --
  \ref{cor:n_is_prime}, it follows that there is a prime number $p$
  such that $\Lambda_\Delta$ can be identified with a linear code
  $L_\Delta\subseteq\F_p^{d+1}$. By Proposition
  \ref{prop:coeff_h_poly}, it follows that $L_\Delta$ has constant age
  $kp$ and so, by Proposition
  \ref{prop:const_age_implies_const_weight}, $L_\Delta$ has constant
  weight $2k$.
 
  By \cite[Corollary
  3.21]{BeckRobins:ComputingTheContinuousDiscretely}),
  $\vol(\Delta)=1+h_k^*$. By Proposition \ref{prop:coeff_h_poly},
  $|L_\Delta|=1+h_k^*$. Since $L_\Delta$ is a linear subspace over
  $\F_p$, it follows that $|L_\Delta|=p^r$ where $r=\dim L_\Delta$.
\end{proof}

We will use Theorem \ref{thm:Props_of_linear_code_L_Delta} to
characterize $d$-dimensional lattice simplices $\Delta$ with
$h^*$-binomial of degree $1<k<(d+1)/2$ by investigating their
corresponding linear codes $L_\Delta$ more closely.

\section{Elements of Coding Theory}
\label{sec:coding_theory}
In this section, we will recall the notions and statements from coding
theory used later on. Fix a prime number $p$.

A linear map $f:\mathds{F}_p^d\to\mathds{F}_p^d$ is a \emph{monomial
  transform} if there exists a permutation $\sigma\in S_d$ and
elements $\tau_1,\ldots,\tau_d\in\mathds{F}_p^*$ such that
\[
f(x_1,\ldots,x_d)=(\tau_1x_{\sigma(1)},\ldots,\tau_dx_{\sigma(d)}).
\]
In other words $f$ is the composition of a permutation of the
coordinates followed by a coordinate wise dilation. Two linear codes
$L_1,L_2\subseteq\mathds{F}_p^d$ are called \emph{equivalent} if there
exists a monomial transform $f:\mathds{F}_p^d\to\mathds{F}_p^d$ such
that $f(L_1)=L_2$. This gives a equivalence relation on the set of
linear codes in $\mathds{F}_p^d$.

\begin{defi}
  Let $L\subseteq\mathds{F}_p^d$ be a linear code of dimension $k$ and
  block length $d$. A matrix $A\in\mathds{F}_p^{k\times d}$ is
  called a \emph{generator matrix} of $L$ if the row space is the
  given code, \ie
  $L=\{\mathbf{x}A:\mathbf{x}\in\mathds{F}_p^k\}$. In other words:
  To get a generator matrix of $L$ just take a basis of $L$ and
  arrange these vectors as the rows of a matrix $A$.
\end{defi}

Next we introduce the class of linear codes which we are interested
in.

\begin{defi}
  Fix a natural number $r$. Let $m = (p^r-1)/(p-1)$ be the number of
  points in $(r-1)$-dimensional projective space over $\F_p$.
  Consider $A$ an $r \times m$-matrix over $\F_p$ whose columns
  consist of one nonzero vector from each $1$-dimensional subspace of
  $\F_p^r$. Then $A$ forms the generator matrix of the \emph{simplex
    code} $L(A)$ of dimension $r$ and block length $m$. It is evident
  from the definition that all the linear codes which are produced by
  this procedure are equivalent. Furthermore, the simplex codes are the
  dual codes to the well-known Hamming codes.
\end{defi}

\begin{defi}
  Let $L\subseteq\mathds{F}_p^d$ be a linear code of dimension $r$ and
  block length $d$. Let $A\in\mathds{F}_p^{r\times d}$ be a generator
  matrix for $L$. If there is a linear code
  $L'\subset\mathds{F}_p^{d'}$ of dimension $r$ and block length
  $d'<d$ with generator matrix $A'\in\mathds{F}_p^{r\times d'}$ such
  that $A$ is equal to the $d/d'$-replication of the matrix $A'$, \ie
  \[
  A=\left(A',A',\ldots,A'\right)
  \]
  then $L$ will be called the \emph{$d/d'$-replication} of the linear
  code $L'$.
\end{defi}

\begin{thm}[{\cite{MacWilliams:ErrorCorrectingCodes}}, {\cite{WardWood:EquivalenceOfCodes}}]\label{thm:MacWilliams}
  Two linear codes $L_1, L_2 \subset \F_p^d$ are equivalent if an only
  if there exists a linear isomorphism $f:L_1 \to L_2$ that preserves
  weights.
\end{thm}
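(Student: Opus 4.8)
The plan is to prove the substantial implication, since the converse is immediate: a monomial transform of $\F_p^d$ permutes the coordinates and rescales each by a unit, so it preserves the number of nonzero entries of every vector, and its restriction to $L_1$ is then a weight-preserving linear isomorphism onto $L_2$. So assume $f\colon L_1\to L_2$ is a weight-preserving linear isomorphism. For $j=1,\dots,d$ let $\lambda_j\in\Hom(L_1,\F_p)$ be the restriction to $L_1$ of the $j$-th coordinate projection of $\F_p^d$, let $\mu_j\in\Hom(L_2,\F_p)$ be the $j$-th coordinate projection on $L_2$, and set $\nu_j:=\mu_j\circ f\in\Hom(L_1,\F_p)$. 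Since $f$ is a weight-preserving bijection onto $L_2$, for every $c\in L_1$ one gets
\[
  |\{j:\lambda_j(c)=0\}|=d-\omega(c)=d-\omega(f(c))=|\{j:\nu_j(c)=0\}|.
\]

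Next I would fix a nontrivial additive character $\psi\colon\F_p\to\C^*$ and use that $\frac1p\sum_{t\in\F_p}\psi(t\lambda(c))$ equals $1$ if $\lambda(c)=0$ and $0$ otherwise, in order to rewrite each side of the last equality as a linear combination of the functions $c\mapsto\psi(\mu(c))$, $\mu\in\Hom(L_1,\F_p)$, which are exactly the characters of the finite abelian group $(L_1,+)$ and hence linearly independent over $\C$. Comparing coefficients then forces: the number of $j$ with $\lambda_j=0$ equals the number with $\nu_j=0$, and for each nonzero $\mu$ the number of $j$ with $\lambda_j\in\F_p^*\mu$ equals the number with $\nu_j\in\F_p^*\mu$. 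Equivalently, the multisets of lines $\{\F_p\lambda_1,\dots,\F_p\lambda_d\}$ and $\{\F_p\nu_1,\dots,\F_p\nu_d\}$ in $\Hom(L_1,\F_p)$ coincide.

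From this multiset equality I would pick a permutation $\sigma\in S_d$ and units $\tau_1,\dots,\tau_d\in\F_p^*$ with $\nu_{\sigma(j)}=\tau_j\lambda_j$ for all $j$ (choosing $\tau_j$ arbitrarily whenever $\lambda_j=\nu_{\sigma(j)}=0$, matching zero coordinates of $L_1$ to zero coordinates of $L_2$), and define the monomial transform $F\colon\F_p^d\to\F_p^d$ by $F(x)_{\sigma(j)}:=\tau_jx_j$. Then for $c\in L_1$ and every $j$, $F(c)_{\sigma(j)}=\tau_j\lambda_j(c)=\nu_{\sigma(j)}(c)=\mu_{\sigma(j)}(f(c))=f(c)_{\sigma(j)}$, so $F|_{L_1}=f$ and in particular $F(L_1)=L_2$, which exhibits $L_1$ and $L_2$ as equivalent.

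The step I expect to be the crux is deducing the equality of the multisets of coordinate-functional lines from the bare equality of zero-counts; this is exactly where the additive-character expansion and the linear independence (equivalently, orthogonality) of the characters of $(L_1,+)\cong(\Z/p\Z)^r$ do the work, and some care is required to keep track of the $\F_p^*$-scaling ambiguity so that $\sigma$ comes equipped with the correct units $\tau_j$ and so that coordinates on which $L_1$ vanishes are paired with coordinates on which $L_2$ vanishes. Everything else is routine linear algebra. Alternatively, this is the classical MacWilliams extension theorem and may simply be invoked from \cite{MacWilliams:ErrorCorrectingCodes} or \cite{WardWood:EquivalenceOfCodes}.
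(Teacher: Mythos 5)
Your proposal is correct and follows essentially the same route as the paper: both directions hinge on expressing the weight (equivalently the zero-count) via a nontrivial additive character of $\F_p$, invoking linear independence of the characters of $(L_1,+)$ to match each coordinate functional of $L_1$ with one of $L_2$ up to a nonzero scalar, and then assembling the permutation and units into a monomial transform extending $f$. The only difference is organizational — you compare coefficients once to get an equality of multisets of lines in $\Hom(L_1,\F_p)$, whereas the paper peels off matched characters one at a time by induction — and your explicit handling of multiplicities and of the coordinates on which the codes vanish is, if anything, slightly more careful.
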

\begin{proof}
  If $L_1$ and $L_2$ are equivalent, then there is a monomial
  transform $f:\F_p^d\to\F_p^d$ which maps $L_1$ onto $L_2$. Obviously
  $f$ preserves weights.
 
  Next assume that there is a linear isomorphism $f:L_1\to L_2$ which
  preserves weights. First, we need to choose a non-trivial character
  $\chi:\F_p \to \C^*$ of $(\F_p, +)$. This is done by choosing a
  complex $p$-th root of unity $\zeta$ and defining
  $\chi(\overline{a}) = \zeta^a$ for $a \in \{ 0, 1, \ldots, p-1\}$.

  Denote by $\mathfrak{X}(\F_p)$ the group of all characters of
  $(\F_p, +)$.  For any $a \in \F_p$ we consider the multiplication
  map $\mu_a: \F_p \to \F_p; x \mapsto ax$. Then

 \[
 \F_p \to \mathfrak{X}(\F_p) , a \mapsto \chi \circ \mu_a
 \]
 is an isomorphism of finite abelian groups. This is a special
 instance of the isomorphism
 \begin{align*}
   \Hom(\F_p^d,\F_p)\to \mathfrak{X}(\F_p^d);\lambda\mapsto
   \chi\circ\lambda
 \end{align*}
 where $\mathfrak{X}(\F_p^d)$ denotes the group of characters of the
 group $(\F_p^d,+)$.

 Let $\mathbf{x}\in L_1$ and set $\mathbf{y}\coloneqq
 f(\mathbf{x})$. We can express the weights of $\mathbf{x}$ and
 $\mathbf{y}$ using the character $\chi$:
 \[
 \omega( \mathbf{x} ) = \sum_{i=1}^d \left( 1 - \frac{1}{q} \sum_{ a\in
     \F_p} \chi(ax_i) \right)\quad\text{and}\quad \omega( \mathbf{y} ) =
 \sum_{i=1}^d \left( 1 - \frac{1}{q} \sum_{ b\in \F_p} \chi(by_i)
 \right).
 \]
 Since $f$ preserves weights, we get the equality
 \[
 \sum_{i = 1}^d \sum_{ a\in \F_p} \chi(ax_i) = \sum_{i = 1}^d \sum_{
   b\in \F_p} \chi(by_i)
 \]
 We denote the projection onto the $i$th coordinate by
 $p_i:L_1\to\F_p$. Then the previous equation can be written as an
 equation of characters:
 \[
 \sum_{i = 1}^d \sum_{ a\in \F_p} \chi(a\cdot p_i) = \sum_{i = 1}^d
 \sum_{ b\in \F_p} \chi(b\cdot p_i\circ f)
 \]
 By subtracting $d$ times the constant $1$ function on both sides
 yields:
 \[
 \sum_{i = 1}^d \sum_{ a\in \F_p^*} \chi(a\cdot p_i) = \sum_{i =
   1}^d \sum_{ b\in \F_p^*} \chi(b\cdot p_i\circ f)
 \]
 Let $i=1$ and $a=1$. Since the characters of $\F_p^d$ are linearly
 equivalent, there exists $\sigma(1)\in\{1,\ldots,d\}$ and
 $\tau_1\in\F_p^*$ such that $\chi\circ p_1 = \chi(\tau_1 \cdot
 p_{\sigma(1)}\circ f)$. By the isomorphism
 $\Hom(\F_p^d,\F_p)\to\mathfrak{X}(\F_p^d)$ from above, we get that
 $p_1=\tau_1\cdot p_{\sigma(1)}\circ f$. Hence
 \begin{align*}
   \sum_{a\in\F_p^*}\chi(a\cdot
   p_1)=\sum_{b\in\F_p^*}\chi((b\tau_1)\cdot p_{\sigma(1)}\circ
   f).
 \end{align*}
 By subtracting this equation from the equation above, we get
 \begin{align*}
   \sum_{i = 2}^d \sum_{ a\in \F_p^*} \chi(a\cdot p_i) =
   \sum_{\substack{i = 1\\i\neq\sigma(1)}}^d \sum_{ b\in \F_p^*}
   \chi(b\cdot p_i\circ f)
 \end{align*}
 Inductively, we obtain a permutation $\sigma\in S_d$ and nonzero
 scalars $\tau_1,\ldots,\tau_d\in\F_p^*$ such that
 $p_i=\tau_i\cdot p_{\sigma(i)}\circ f$. We define a monomial
 transform
 \begin{align*}
   F:\F_p^d \to \F_p^d; (x_1, \ldots, x_d) \mapsto
   (\tau_{\sigma^{-1}(1)}^{-1}x_{\sigma^{-1}(1)}, \ldots,
   \tau_{\sigma^{-1}(d)}^{-1}x_{\sigma^{-1}(d)}).
 \end{align*}
 Then, by the choice of $\tau_i$ and $\sigma$, we obtain
 $\left.F\right|_{L_1}=f$. In particular, $F$ induces an equivalence
 of codes.
\end{proof}

\begin{cor}
  Any two $r$-dimensional codes $L_1 , L_2 \subset \F_p^d$ of the same
  constant weight are equivalent.
\end{cor}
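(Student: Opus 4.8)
The plan is to deduce this corollary directly from Theorem~\ref{thm:MacWilliams} together with the elementary fact that constant-weight codes of the same dimension and the same weight have the same weight enumerator, hence are connected by a weight-preserving linear isomorphism. Concretely, suppose $L_1, L_2 \subseteq \F_p^d$ are both $r$-dimensional linear codes of constant weight $w$. Since $\dim L_1 = \dim L_2 = r$, there is \emph{some} linear isomorphism $f : L_1 \to L_2$ (both are $r$-dimensional $\F_p$-vector spaces); the only thing to check is that such an $f$ can be chosen to preserve weights. But every nonzero vector of $L_1$ has weight $w$ and every nonzero vector of $L_2$ has weight $w$, and $f$ maps nonzero vectors to nonzero vectors (it is an isomorphism) and the zero vector to the zero vector. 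Hence $\omega(f(\mathbf{x})) = \omega(\mathbf{x})$ for every $\mathbf{x} \in L_1$: the equation holds trivially at $\mathbf{0}$ and both sides equal $w$ for $\mathbf{x} \neq \mathbf{0}$. So \emph{any} linear isomorphism $f : L_1 \to L_2$ preserves weights.

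Then I would invoke Theorem~\ref{thm:MacWilliams}: since there exists a weight-preserving linear isomorphism $f : L_1 \to L_2$, the codes $L_1$ and $L_2$ are equivalent, i.e.\ there is a monomial transform of $\F_p^d$ carrying one onto the other. That is exactly the assertion of the corollary.

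I do not expect any real obstacle here; the corollary is essentially a one-line consequence of the theorem, the only (minor) point being the observation that for constant-weight codes the weight-preservation condition on a linear isomorphism is automatic. One should perhaps note in passing that the same block length $d$ is part of the hypothesis (both codes sit inside $\F_p^d$), so that the monomial transform produced by Theorem~\ref{thm:MacWilliams} is a genuine monomial transform of a single ambient space. That is the whole proof.

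\begin{proof}
  Since $L_1$ and $L_2$ are both $r$-dimensional $\F_p$-vector
  spaces, there exists a linear isomorphism $f:L_1\to L_2$. We claim
  that $f$ automatically preserves weights. Indeed, for
  $\mathbf{x}\in L_1$ we have $\omega(f(\mathbf{x}))=\omega(\mathbf{x})$:
  if $\mathbf{x}=\mathbf{0}$ both sides vanish, and if
  $\mathbf{0}\neq\mathbf{x}\in L_1$ then $f(\mathbf{x})\neq\mathbf{0}$,
  so $\omega(\mathbf{x})$ equals the constant weight of $L_1$ and
  $\omega(f(\mathbf{x}))$ equals the constant weight of $L_2$, and
  these two constants coincide by assumption. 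Thus $f$ is a
  weight-preserving linear isomorphism, and by Theorem
  \ref{thm:MacWilliams} the codes $L_1$ and $L_2$ are equivalent.
\end{proof}
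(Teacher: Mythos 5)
Your proof is correct and follows essentially the same route as the paper: pick any linear isomorphism $f:L_1\to L_2$ (the paper constructs one by matching bases), observe that weight-preservation is automatic because all nonzero vectors on both sides have the same weight $w$, and invoke Theorem~\ref{thm:MacWilliams}. Nothing is missing.
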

\begin{proof}
  Choose bases $\mathbf{v}_1,\ldots,\mathbf{v}_r$ of $L_1$ and
  $\mathbf{w}_1,\ldots,\mathbf{w}_r$ of $L_2$. Let $f:L_1\to L_2$ be
  the linear isomorphism which maps $\mathbf{v}_i$ onto
  $\mathbf{w}_i$. Since $L_1$ and $L_2$ are two constant weight codes
  of the same weight, the linear function preserves
  weights. Then by the previous Theorem, $L_1$ and $L_2$ are
  equivalent.
\end{proof}

\begin{prop}\label{prop:weight_of_simplex_code}
  An $r$-dimensional simplex code has constant weight
  $p^{r-1}$.
\end{prop}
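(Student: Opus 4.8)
The plan is to count, for a simplex code $L(A)$ of dimension $r$, the number of nonzero entries in an arbitrary nonzero codeword and show it equals $p^{r-1}$, independently of the codeword. Recall that the columns of the generator matrix $A$ are one representative from each of the $m = (p^r-1)/(p-1)$ one-dimensional subspaces of $\F_p^r$. A nonzero codeword is $\mathbf{x} A$ for some nonzero $\mathbf{x} \in \F_p^r$; equivalently, writing $\lambda_{\mathbf{x}} : \F_p^r \to \F_p$ for the linear functional $\mathbf{v} \mapsto \mathbf{x} \cdot \mathbf{v}$, the $j$-th coordinate of the codeword is $\lambda_{\mathbf{x}}(A(x_j))$, where $A(x_j)$ is the chosen column vector for the point $x_j \in \P^{r-1}$. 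So the weight of $\mathbf{x} A$ is the number of points $x \in \P^{r-1}$ for which $\lambda_{\mathbf{x}}$ does not vanish on the line corresponding to $x$ (note this is well-defined: $\lambda_{\mathbf{x}}$ vanishes on one nonzero vector of a line iff it vanishes on the whole line).

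First I would observe that $\lambda_{\mathbf{x}}$ is a nonzero functional since $\mathbf{x} \neq \mathbf{0}$, so its kernel $\ker \lambda_{\mathbf{x}}$ is a hyperplane in $\F_p^r$, i.e. an $(r-1)$-dimensional subspace containing $p^{r-1}$ vectors, hence $(p^{r-1}-1)/(p-1)$ points of $\P^{r-1}$. The weight of the codeword is therefore
\begin{align*}
  \omega(\mathbf{x}A) = m - \frac{p^{r-1}-1}{p-1} = \frac{p^r-1}{p-1} - \frac{p^{r-1}-1}{p-1} = \frac{p^r - p^{r-1}}{p-1} = p^{r-1}.
\end{align*}
This is independent of $\mathbf{x}$, which simultaneously establishes that the simplex code has constant weight and that the constant value is $p^{r-1}$.

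The argument is essentially a direct count, so there is no serious obstacle; the only point requiring a moment of care is the well-definedness remark — that whether a coordinate of the codeword vanishes depends only on the projective point $x_j$ and not on the particular nonzero representative $A(x_j)$ chosen for it — which is immediate from linearity of $\lambda_{\mathbf{x}}$. One should also note in passing that the constant weight $p^{r-1}$ is nonzero, so indeed no nonzero codeword is the zero vector, consistent with $\mathbf{x} \mapsto \mathbf{x}A$ being injective (the columns of $A$ span $\F_p^r$).
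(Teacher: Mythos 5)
Your proof is correct and uses essentially the same counting argument as the paper: the weight of a codeword is the number of points of $\P^{r-1}(\F_p)$ lying off the hyperplane where the corresponding linear functional vanishes, namely $\frac{p^r-1}{p-1}-\frac{p^{r-1}-1}{p-1}=p^{r-1}$. If anything, your version is slightly more complete, since the paper only carries out the count for the first row of the generator matrix (the functional given by the first coordinate) and leaves the reduction to an arbitrary nonzero codeword implicit, whereas you treat a general functional $\lambda_{\mathbf{x}}$ directly.
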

\begin{proof}
  Consider the first row of a generating matrix $A$ for a simplex
  code.  Then it contains exactly $p^{r-1}$ non-zero entries, because
  there exist exactly $p^{r-1}$ points in the projective space
  $\P^{r-1}(\F_p)$ having non-zero first homogeneous coordinate.
\end{proof}

\begin{thm}[{\cite{Bonisoli:EquidistantLinearCodes}},
  {\cite{WardWood:EquivalenceOfCodes}}]
  \label{thm:Bonisoli}%[Bonisoli, 1984]
  Every $r$-dimensional code $L \subset \F_p^d$ of constant weight
  such that no coordinate is $0$ for all vectors in $L$ is equivalent to an
  $m$-fold replication of $r$-dimensional simplex codes.
\end{thm}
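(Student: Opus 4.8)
The plan is to describe $L$ by a generator matrix, read its columns as a multiset of points of a projective space, and then turn the constant–weight hypothesis into a statement about hyperplane sections that forces this multiset to be uniform. First I would fix a generator matrix $G\in\F_p^{r\times d}$ of $L$ (so $L$ is the row space of $G$). The hypothesis that no coordinate is identically $0$ on $L$ says exactly that no column of $G$ is the zero vector of $\F_p^r$. If $r=1$ the assertion is immediate: $G=(\tau_1,\dots,\tau_d)$ with all $\tau_i\in\F_p^*$, and rescaling the $i$-th coordinate by $\tau_i^{-1}$ is a monomial transform carrying $L$ onto the $d$-fold replication of the $1$-dimensional simplex code $(1)$. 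So assume $r\ge 2$.

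Each column of $G$ spans a line, i.e.\ a point of $\P^{r-1}(\F_p)$. Multiplying the $i$-th column of $G$ by $\tau_i\in\F_p^*$ replaces $L$ by the equivalent code $\{(\tau_1x_1,\dots,\tau_dx_d):\mathbf{x}\in L\}$, and permuting columns permutes coordinates; so after a monomial transform I may assume that, for each point $x\in\P^{r-1}(\F_p)$, the columns of $G$ spanning $x$ form a block of $n(x)\ge 0$ copies of one chosen representative vector $v_x$. Clearly $\sum_x n(x)=d$. For $\mathbf{y}\in\F_p^r\setminus\{0\}$ the $i$-th coordinate of the codeword $\mathbf{y}G$ is the standard scalar product of $\mathbf{y}$ with the $i$-th column of $G$, which vanishes precisely when that column lies on the hyperplane $H_{\mathbf{y}}:=\{x\in\P^{r-1}(\F_p):\mathbf{y}\cdot v_x=0\}$, so
\begin{align*}
  \omega(\mathbf{y}G)=d-\sum_{x\in H_{\mathbf{y}}}n(x).
\end{align*}
As $\mathbf{y}$ runs over representatives of $\P^{r-1}(\F_p)$, the hyperplane $H_{\mathbf{y}}$ runs over \emph{all} hyperplanes of $\P^{r-1}(\F_p)$. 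Hence $L$ has constant weight $\omega$ if and only if there is a constant $c:=d-\omega$ with $\sum_{x\in H}n(x)=c$ for every hyperplane $H\subset\P^{r-1}(\F_p)$.

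It remains to show this forces $n$ to be constant. Write $N=(p^r-1)/(p-1)$ for the number of points (equivalently hyperplanes) of $\P^{r-1}(\F_p)$, $\theta=(p^{r-1}-1)/(p-1)$ for the number of hyperplanes through a fixed point, and $\mu=(p^{r-2}-1)/(p-1)$ for the number through two distinct fixed points (with $\mu=0$ if $r=2$). Fixing a point $x_0$ and summing the identity $\sum_{x\in H}n(x)=c$ over the $\theta$ hyperplanes $H\ni x_0$, a double count of incidences gives
\begin{align*}
  c\,\theta=\sum_{H\ni x_0}\ \sum_{x\in H}n(x)=\theta\,n(x_0)+\mu\sum_{x\ne x_0}n(x)=(\theta-\mu)\,n(x_0)+\mu\,d.
\end{align*}
Since $\theta-\mu=p^{r-2}\ne 0$, this determines $n(x_0)=(c\theta-\mu d)/(\theta-\mu)$ independently of $x_0$; so $n$ is constant, and as $\sum_x n(x)=d>0$ we get $n\equiv m$ with $d=mN$ for a positive integer $m$. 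Then, after the monomial transform above, $G$ equals — up to a permutation of columns — the block matrix $(S_r,\dots,S_r)$ with $m$ identical blocks, where $S_r$ is the $r\times N$ matrix whose columns are the representatives $v_x$, $x\in\P^{r-1}(\F_p)$, i.e.\ a generator matrix of the $r$-dimensional simplex code. Hence $L$ is equivalent to the $m$-fold replication of the $r$-dimensional simplex code.

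The only genuine point is the double count in the last paragraph (one could instead invoke the fact that the point–hyperplane incidence matrix of $\P^{r-1}(\F_p)$ has full rank over $\Q$, its Gram matrix being $(\theta-\mu)I+\mu J$); everything else is bookkeeping, translating between monomial transforms of $L$ and column operations on $G$, and between weights of codewords and the sizes of hyperplane sections of the column multiset.
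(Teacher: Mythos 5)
Your argument is correct, but it proves the theorem by a genuinely different route than the paper. The paper deduces Bonisoli's theorem from the MacWilliams equivalence theorem (Theorem \ref{thm:MacWilliams}): since every linear automorphism of a constant-weight code preserves weights, each $B\in\GL(r,\F_p)$ induces a monomial permutation of the columns of the generator matrix, and transitivity of $\GL(r,\F_p)$ on $\F_p^r\setminus\{0\}$ then forces every nonzero vector to occur as a column up to scalar; one splits off a simplex code and iterates. You instead encode the columns as a multiset $n(\cdot)$ on $\P^{r-1}(\F_p)$, observe that constant weight is equivalent to all hyperplane sections having the same total multiplicity $c=d-\omega$, and then determine $n$ by double counting incidences through a fixed point: with $\theta=(p^{r-1}-1)/(p-1)$ hyperplanes through a point and $\mu=(p^{r-2}-1)/(p-1)$ through two, the identity $c\theta=(\theta-\mu)n(x_0)+\mu d$ together with $\theta-\mu=p^{r-2}\neq 0$ pins down $n(x_0)$ independently of $x_0$. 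All the numerics check out (including the degenerate cases $r=1$, handled separately, and $r=2$, where $\mu=0$), and the positivity $m=d/N\geq 1$ follows since $n$ is a constant nonnegative integer summing to $d>0$. Your proof is entirely self-contained and elementary (it is essentially the observation that the point--hyperplane incidence matrix of $\P^{r-1}(\F_p)$ is nonsingular over $\Q$), whereas the paper's proof is shorter on the page but leans on the full strength of the MacWilliams extension theorem proved just before it; in a presentation where that theorem is not otherwise needed, your counting argument is the lighter tool.
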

\begin{proof}
  Let $A$ be the generating $(r \times n)$-matrix for the code $L
  \subset\F_p^d$.  Without loss of generality we assume that $A$ has
  no zero-columns. Let $B \in GL(r,\F_p)$ be an arbitrary invertible
  $(r \times r)$-matrix. Then $BA$ is another generating matrix for
  $L$ and $B$ defines a natural bijective linear map $L \to L$. Since
  $L$ has constant weight, by Theorem \ref{thm:MacWilliams},
  the columns of $BA$ are obtained by a monomial transformation of
  columns of $A$. On the other hand, we can choose $B$ in such a way
  that the first column of $BA$ will be any prescribed nonzero vector
  in $\F_p^d$. Therefore, every nonzero vector in $\F_p^d$ is
  proportional to some column in $A$, i.e., $A$ contains a $(r\times
  (p^r-1)/(p-1))$-submatrix which determines a $r$-dimensional simplex
  code. Since any simplex code has constant weight, the
  remaining $(d-(p^r-1)/(p-1))$ columns of $A$ also generate an
  $r$-dimensional linear code $L' \subset \F_p^{d-(p^r-1)/(p-1)}$ of
  constant weight.  We can apply to $L'$ the same
  arguments. By this procedure, we decompose $L$ into a sequence of
  $m$ $r$-dimensional simplex codes.
\end{proof}

\begin{cor}
  Let $\Delta\subseteq\R^d$ be a $d$-dimensional lattice simplex with
  $h^*$-binomial of degree $1<k<(d+1)/2$. Assume that $\Delta=p^r$ for a prime number $p$ and a positive integer $r$. The
  number of vertices of $\Delta$ is a multiple of
  $(p^r-1)/(p-1)$.% where $\dim L_\Delta=r$.
\end{cor}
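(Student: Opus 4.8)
The plan is to read the statement off directly from Bonisoli's structure theorem (Theorem~\ref{thm:Bonisoli}) once $\Delta$ is replaced by its associated linear code. Throughout I keep the standing hypothesis of this part of the paper that $\Delta$ is not a pyramid over a lower-dimensional simplex; as explained below, this hypothesis is genuinely needed.

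First I would apply Theorem~\ref{thm:Props_of_linear_code_L_Delta}: the assumptions $1<k<(d+1)/2$ and $\vol(\Delta)=p^r$ force $\Lambda_\Delta$ to be identified with an $r$-dimensional linear code $L_\Delta\subseteq\F_p^{d+1}$ of constant weight $2k$, and in particular $L_\Delta$ has block length $d+1$. By the characterization of pyramids in terms of $\Lambda_\Delta$, the assumption that $\Delta$ is not a pyramid says precisely that no coordinate of $L_\Delta$ vanishes on all of $L_\Delta$, so Theorem~\ref{thm:Bonisoli} applies and yields that $L_\Delta$ is equivalent to an $m$-fold replication of $r$-dimensional simplex codes for some positive integer $m$. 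Now an $r$-dimensional simplex code has block length $(p^r-1)/(p-1)$ by definition, hence an $m$-fold replication of such codes has block length $m\cdot(p^r-1)/(p-1)$; and a monomial transform, hence a code equivalence, takes place inside a fixed ambient space $\F_p^n$ and therefore preserves $n$. Consequently $d+1=m\cdot(p^r-1)/(p-1)$, and since $d+1$ is exactly the number of vertices of the simplex $\Delta$, this number is a multiple of $(p^r-1)/(p-1)$.

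I do not anticipate a real obstacle: the substantive input is already packaged in Theorem~\ref{thm:Props_of_linear_code_L_Delta} and Theorem~\ref{thm:Bonisoli}, and the rest is the bookkeeping above. The one point that deserves care is the non-pyramid hypothesis, which cannot be dropped: when $r\ge2$ we have $(p^r-1)/(p-1)>1$, and adjoining a single apex to a non-pyramidal example changes the number of vertices by $1$ without changing $h^*_\Delta$ or $\vol(\Delta)$, so the divisibility fails. I would therefore state this hypothesis explicitly in the corollary, as in the Main Theorems, or else phrase the conclusion for the non-pyramidal core of $\Delta$.
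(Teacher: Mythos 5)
Your proof is correct and takes essentially the same route as the paper: identify $L_\Delta$ via Theorem~\ref{thm:Props_of_linear_code_L_Delta} and read the block length $d+1=m(p^r-1)/(p-1)$ off the decomposition of Theorem~\ref{thm:Bonisoli}. Your remark that the non-pyramid hypothesis must be stated explicitly in the corollary is a correct catch of an omission in the paper — it is needed both to invoke Theorem~\ref{thm:Props_of_linear_code_L_Delta} and to satisfy the ``no coordinate identically zero'' condition of Theorem~\ref{thm:Bonisoli} — and your counterexample via adjoining an apex shows it cannot be dropped when $r\ge2$.
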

\begin{proof}
  By Theorem \ref{thm:Props_of_linear_code_L_Delta}, the linear code $L_\Delta$ is an $r$-dimensional linear
  subspace of $\F_p^{d+1}$. By Theorem \ref{thm:Bonisoli}, $d+1$ is an
  integer multiple of $(p^r-1)/(p-1)$. The number of vertices of
  $\Delta$ equals to $d+1$.
\end{proof}

\begin{cor}\label{cor:arithmetic_equation}
  With the assumptions of the previous corollary we have
  \begin{align*}
  2k(p^r-1)=(d+1)(p-1)p^{r-1}.
  \end{align*}
\end{cor}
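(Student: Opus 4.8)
The plan is to prove the identity by counting, in two different ways, the number $N$ of pairs $(\mathbf{x},j)$ with $\mathbf{x}\in L_\Delta\setminus\{\mathbf{0}\}$, $j\in\{1,\ldots,d+1\}$ and $x_j\neq 0$.

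First I would count over $\mathbf{x}$. By Theorem \ref{thm:Props_of_linear_code_L_Delta} the code $L_\Delta\subseteq\F_p^{d+1}$ is an $r$-dimensional linear subspace of constant weight $2k$; hence $L_\Delta\setminus\{\mathbf{0}\}$ has $p^r-1$ elements, and each of them contributes exactly $2k$ pairs, so $N=2k(p^r-1)$.

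Next I would count over $j$. Fix a coordinate index $j$ and consider the $\F_p$-linear projection $p_j\colon L_\Delta\to\F_p$, $\mathbf{x}\mapsto x_j$. Since $\Delta$ is not a pyramid, the characterization of lattice pyramids in terms of $\Lambda_\Delta$ (a coordinate vanishes identically on $\Lambda_\Delta$ if and only if $\Delta$ is a pyramid) forces $p_j$ to be nonzero, hence surjective; by rank--nullity $\ker p_j$ has dimension $r-1$, so the number of $\mathbf{x}\in L_\Delta$ with $x_j\neq 0$ equals $p^r-p^{r-1}=(p-1)p^{r-1}$. Summing over the $d+1$ coordinates gives $N=(d+1)(p-1)p^{r-1}$. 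Equating the two expressions for $N$ yields $2k(p^r-1)=(d+1)(p-1)p^{r-1}$, as claimed.

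There is essentially no obstacle here beyond making sure the hypothesis that $\Delta$ is not a pyramid is invoked precisely where it is needed, namely to guarantee that every coordinate projection of $L_\Delta$ is surjective. For completeness I note that the same relation also follows from the structure theorem used in the previous corollary: writing $L_\Delta$ (up to equivalence) as an $m$-fold replication of $r$-dimensional simplex codes gives $d+1=m\,(p^r-1)/(p-1)$, while Proposition \ref{prop:weight_of_simplex_code} and additivity of weights over the replication blocks give $2k=m\,p^{r-1}$; eliminating $m$ reproduces the identity.
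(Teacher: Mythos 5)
Your proof is correct, and it takes a genuinely different route from the paper's. The paper deduces the identity from the structural output of Bonisoli's theorem (Theorem \ref{thm:Bonisoli}): writing $L_\Delta$ as an $m$-fold replication of $r$-dimensional simplex codes gives $d+1=m(p^r-1)/(p-1)$ from the block length and $2k=mp^{r-1}$ from Proposition \ref{prop:weight_of_simplex_code}, and eliminating $m$ yields the relation --- this is exactly the alternative you mention in your closing remark. Your main argument instead double-counts the incidences $(\mathbf{x},j)$ with $\mathbf{x}\in L_\Delta\setminus\{\mathbf{0}\}$ and $x_j\neq 0$: counting over $\mathbf{x}$ uses only that $L_\Delta$ has constant weight $2k$ and $p^r-1$ nonzero elements (Theorem \ref{thm:Props_of_linear_code_L_Delta}), while counting over $j$ uses that each coordinate projection $p_j\colon L_\Delta\to\F_p$ is surjective, so exactly $p^r-p^{r-1}$ codewords are nonzero in position $j$. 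This is the standard computation of the total weight of a nondegenerate linear code, and it buys you independence from Bonisoli's theorem, which is by far the deepest ingredient in the paper's version. You also invoke the non-pyramid hypothesis at precisely the right spot (surjectivity of each $p_j$); in the paper the same hypothesis enters through the requirement in Theorem \ref{thm:Bonisoli} that no coordinate vanish identically on $L$. The only thing your shortcut does not reproduce is the divisibility statement of the preceding corollary (that $(p^r-1)/(p-1)$ divides $d+1$), which the replication structure gives for free; but that is not part of the statement being proved here, so your argument is complete as it stands.
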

\begin{proof}
  By Theorem \ref{thm:Bonisoli}, the linear code $L_\Delta$ is
  equivalent to a replicated simplex code
  $H\subseteq\F_p^{(p^r-1)/(p-1)}$, \ie $d+1=m(p^r-1)/(p-1)$ for a
  positive integer $m$. In particular, we have
  \begin{equation}
    m=\frac{(d+1)(p-1)}{p^r-1}.
  \end{equation}
  By Proposition \ref{prop:weight_of_simplex_code}, the weight of the simplex code $H$ is equal to $p^{r-1}$. Thus, the
  weight of $L_\Delta$ is $m p^{r-1}$. By Theorem
  \ref{thm:Props_of_linear_code_L_Delta}, the weight of
  $L_\Delta$ equals to $2k$.
\end{proof}

\section{Proof of the Main theorem}
\label{sec:proof_of_main_thm}

By Theorem \ref{thm:Props_of_linear_code_L_Delta}, a $d$-dimensional lattice simplex $\Delta$ with $h^*$-binomial of degree $1<k<(d+1)/2$ and a chosen order of its vertices is determined up to isomorphism by its associated linear code $L_\Delta\subseteq\F_p^{d+1}$ of constant age $kp$. So it is enough to classify linear codes in $\F_p^{d+1}$ of constant age.

If $p=2$, then, by Proposition \ref{prop:const_age_implies_const_weight}, the weight-function and the age-function coincide. Hence, our classification result follows by the Theorem of Bonisoli (see Theorem \ref{thm:Bonisoli}).
\begin{thm}
  The $d$-dimensional lattice simplices $\Delta$ with
  $h^*$-binomial of degree $1<k<(d+1)/2$ and $\mathrm{vol}(\Delta)=2^r$ for a positive integer $r$ which are not a pyramid over a lower-dimensional simplex are in correspondence with linear constant weight codes in $\F_p^{d+1}$ of weight $2k$. These codes are classified by the Theorem of Bonisoli. Furthermore, the numbers
  $k, d, r$ are related by the equation:
  \begin{align*}
    2^{r-2}(d+1)=k(2^r-1)
  \end{align*}
\end{thm}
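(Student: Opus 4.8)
The plan is to deduce the statement directly from the dictionary between simplices and codes together with Bonisoli's classification. First I would invoke Theorem~\ref{thm:Props_of_linear_code_L_Delta}: a $d$-dimensional lattice simplex $\Delta$ with $h^*$-binomial of degree $1<k<(d+1)/2$ which is not a pyramid, equipped with a chosen ordering of its vertices, corresponds under $\Delta\mapsto\Lambda_\Delta$ to a linear code $L_\Delta\subseteq\F_2^{d+1}$ of constant age $2k$ none of whose coordinates vanishes identically on $L_\Delta$ (the latter being the criterion characterizing pyramids in terms of $\Lambda_\Delta$ proved in Section~\ref{sec:G_Delta}). Since $p=2$, Proposition~\ref{prop:const_age_implies_const_weight} shows that the age- and weight-functions coincide, so ``constant age $2k$'' is the same condition as ``constant weight $2k$''. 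For the reverse direction of the correspondence I would observe that, starting from any linear code $L\subseteq\F_2^{d+1}$ of constant weight $2k$ with $1<k<(d+1)/2$ having no identically zero coordinate, Proposition~\ref{prop:coeff_h_poly} forces $h^*_{\Delta_\Lambda}=1+(|L|-1)t^k$, while the pyramid criterion shows $\Delta_\Lambda$ is not a pyramid; hence the two classes match bijectively.

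Once the correspondence is in place, the classification assertion is exactly Bonisoli's theorem: by Theorem~\ref{thm:Bonisoli}, the constant weight code $L_\Delta$, having no zero coordinate, is equivalent to an $m$-fold replication of $r$-dimensional simplex codes, where $2^r=|L_\Delta|=\vol(\Delta)$ by Theorem~\ref{thm:Props_of_linear_code_L_Delta}. Fixing a generator matrix $S_r$ of an $r$-dimensional simplex code, this puts the generator matrix of $L_\Delta$ in the form $(S_r,\ldots,S_r)$ up to a permutation of the columns.

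The numerical relation is immediate from Corollary~\ref{cor:arithmetic_equation}: substituting $p=2$ into $2k(p^r-1)=(d+1)(p-1)p^{r-1}$ gives $2k(2^r-1)=(d+1)2^{r-1}$, and dividing by $2$ yields $2^{r-2}(d+1)=k(2^r-1)$; for $r=1$ this reads $d+1=2k$. Combined with the identity $d+1=m(2^r-1)$ for the block length of an $m$-fold replicated $r$-dimensional simplex code, it also pins down the number of repetitions as $m=(d+1)/(2^r-1)=k/2^{r-2}$.

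Finally, I do not expect a genuine obstacle at this stage: the whole argument is a repackaging of Theorem~\ref{thm:Props_of_linear_code_L_Delta} and Theorem~\ref{thm:Bonisoli}, both already available. The only points requiring a moment's attention are the coincidence of age and weight in characteristic $2$, which is precisely what lets Bonisoli's theorem apply verbatim, and verifying that the simplex-to-code correspondence runs bijectively in both directions.
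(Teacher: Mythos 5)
Your proposal is correct and follows essentially the same route as the paper: identify $\Delta$ with the constant-age code $L_\Delta$ via Theorem~\ref{thm:Props_of_linear_code_L_Delta}, observe that for $p=2$ age and weight coincide by Proposition~\ref{prop:const_age_implies_const_weight} so that Bonisoli's Theorem~\ref{thm:Bonisoli} applies directly, and read off the numerical relation from Corollary~\ref{cor:arithmetic_equation} with $p=2$. The only difference is that you spell out the reverse direction of the correspondence (constant-weight code $\Rightarrow$ non-pyramid simplex with $h^*$-binomial via Proposition~\ref{prop:coeff_h_poly}), which the paper leaves implicit; that is a welcome but minor addition, not a different argument.
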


For $p>2$, the age-function and the weight-function differ in general. We consider a construction to get linear codes of constant age.
\begin{prop}\label{prop:constr_codes_const_age}
 Fix a prime number $p>2$ and a positive integer $r$. Set $m=(p^r-1)/(p-1)$. Let $A_1,\ldots,A_s$ be $(r\times m)$-generator matrices of $r$-dimensional simplex codes. Then the linear code with generator matrix:
 \begin{align*}
  A\coloneqq(A_1,-A_1,\ldots,A_s,-A_s)
 \end{align*}
 has constant age $sp^r$.
\end{prop}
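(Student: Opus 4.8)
The plan is to describe every codeword of $L(A)$ explicitly and read off its age. Since $A_1$ is the generator matrix of an $r$-dimensional simplex code, the linear map $\F_p^r\to\F_p^m$, $\mathbf{x}\mapsto\mathbf{x}A_1$, is injective, hence so is $\mathbf{x}\mapsto\mathbf{x}A$; therefore every element of $L(A)$ is uniquely of the form $\mathbf{x}A=(\mathbf{x}A_1,-\mathbf{x}A_1,\ldots,\mathbf{x}A_s,-\mathbf{x}A_s)$ with $\mathbf{x}\in\F_p^r$, and it is nonzero exactly when $\mathbf{x}\neq\mathbf{0}$. So it suffices to show that $\age(\mathbf{x}A)=sp^r$ for every nonzero $\mathbf{x}$.

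The key observation is that appending $-A_j$ next to $A_j$ converts the Hamming weight of $\mathbf{x}A_j$ into ($p$ times) an age contribution, which is precisely the mechanism already visible in the example $A=(A_1,-A_1)$ over $\F_3$. Concretely, I would fix $j$ and a nonzero $\mathbf{x}$, let $c_1,\ldots,c_m$ be the columns of $A_j$, and note that the two coordinates of $\mathbf{x}A$ coming from the $i$-th columns of $A_j$ and $-A_j$ are $\mathbf{x}\cdot c_i$ and $-(\mathbf{x}\cdot c_i)$. Using the elementary fact that for $a\in\F_p$ the integer representatives in $\{0,1,\ldots,p-1\}$ of $a$ and of $-a$ sum to $p$ when $a\neq0$ and to $0$ when $a=0$, the age contributed by the block pair $(A_j,-A_j)$ equals $p\cdot|\{i : \mathbf{x}\cdot c_i\neq0\}| = p\cdot\omega(\mathbf{x}A_j)$.

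Finally I would invoke Proposition \ref{prop:weight_of_simplex_code}: since $A_j$ generates an $r$-dimensional simplex code and $\mathbf{x}\neq\mathbf{0}$, the vector $\mathbf{x}A_j$ has weight $p^{r-1}$, so each block pair contributes $p\cdot p^{r-1}=p^r$ to the age. Summing over $j=1,\ldots,s$ gives $\age(\mathbf{x}A)=sp^r$, independent of $\mathbf{x}$, which is exactly the assertion.

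There is no serious obstacle here. The only points requiring (minor) care are the injectivity of $\mathbf{x}\mapsto\mathbf{x}A$, so that every nonzero codeword genuinely arises from a nonzero $\mathbf{x}$ and Proposition \ref{prop:weight_of_simplex_code} applies, and the bookkeeping of integer representatives that makes $\overline{a}+\overline{-a}\in\{0,p\}$ for $a\in\F_p$; everything else is a direct application of the constant-weight property of simplex codes.
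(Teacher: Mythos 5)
Your argument is correct and is exactly the mechanism the paper has in mind: the paper actually states Proposition \ref{prop:constr_codes_const_age} without proof, relying on the observation illustrated in the introduction's example over $\F_3$ that paired coordinates $a$ and $-a$ have integer representatives summing to $p$ or $0$, combined with the constant weight $p^{r-1}$ of simplex codes from Proposition \ref{prop:weight_of_simplex_code}. Your write-up supplies the missing details (injectivity of $\mathbf{x}\mapsto\mathbf{x}A$ and the per-block bookkeeping) cleanly and correctly.
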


\begin{thm}
  The $d$-dimensional lattice simplices $\Delta$ with $h^*$-binomial of degree $1<k<(d+1)/2$ and $\mathrm{vol}(\Delta)=p^r$ for a prime number $p$ and a positive
  integer $r$ which are not a pyramid over a
  lower-dimensional simplex are in correspondence with linear codes $L_\Delta\subseteq\F_p^{d+1}$ of constant age $kp$.  The numbers $p,d,k, r$ are related by
  the equation
  \begin{align*}
    (p^r-p^{r-1})(d+1)=2k(p^r-1)
  \end{align*}
 and the generator matrix of those codes can be written up to permutation of the columns in the
  form
  \begin{align*}
    (A_1, -A_1, A_2, -A_2, \ldots, A_s, -A_s ),
  \end{align*}
  where $s=k/p^{r-1}$ and $A_1, \ldots, A_s$ are generator matrices of
  $r$-dimensional simplex codes.
\end{thm}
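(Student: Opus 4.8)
The plan is this. By Theorem~\ref{thm:Props_of_linear_code_L_Delta} together with the correspondence between $d$-dimensional lattice simplices (with ordered vertices) and their subgroups $\Lambda_\Delta$, a lattice simplex $\Delta$ with $h^*$-binomial of degree $1<k<(d+1)/2$ which is not a pyramid is the same datum as an $r$-dimensional linear code $L_\Delta\subseteq\F_p^{d+1}$ of constant age $kp$ with no identically-zero coordinate, taken up to a permutation of coordinates; so it suffices to classify the latter. The numerical relation $(p^r-p^{r-1})(d+1)=2k(p^r-1)$ is a restatement of Corollary~\ref{cor:arithmetic_equation}, and since $\gcd(p^{r-1},2)=1$ it gives $p^{r-1}\mid k$, so that $s:=k/p^{r-1}$ is a positive integer and $m:=(d+1)(p-1)/(p^r-1)=2s$. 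The implication ``$\Leftarrow$'' of the structural statement is Proposition~\ref{prop:constr_codes_const_age}: a code with generator matrix $(A_1,-A_1,\dots,A_s,-A_s)$, the $A_i$ generating $r$-dimensional simplex codes, has constant age $sp^r=kp$. Hence only the converse needs work.

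For the converse, fix such a code $L_\Delta$. It has constant weight $2k$ by Proposition~\ref{prop:const_age_implies_const_weight}, and since no coordinate vanishes identically, the theorem of Bonisoli (Theorem~\ref{thm:Bonisoli}) shows $L_\Delta$ is equivalent to an $m$-fold replication of an $r$-dimensional simplex code. Following the monomial transform that realizes this equivalence and then permuting coordinates, one may assume that a generator matrix $A$ of $L_\Delta$ has its $d+1$ columns grouped by the line of $\F_p^r$ they span, with exactly $m$ columns --- each a nonzero scalar multiple of a chosen line representative --- on each of the $(p^r-1)/(p-1)$ lines. Thus $A$ is recorded by a multiplicity function $C\colon\F_p^r\setminus\{0\}\to\Z_{\ge0}$ with $\sum_{w\in\ell\setminus\{0\}}C_w=m$ for every line $\ell$, and the target becomes: $C$ is invariant under $w\mapsto -w$. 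Indeed, once this is known, the common value $m=2s$ on each line splits the columns lying on it into $s$ pairs $\{v,-v\}$ (note $v\neq -v$ since $p$ is odd), and picking the $i$-th representative of every line assembles a simplex-code generator matrix $A_i$ with $A=(A_1,-A_1,\dots,A_s,-A_s)$ up to permutation.

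To prove the symmetry of $C$ I would pass to $\F_q$, $q=p^r$. Identify $\F_p^r$ with $\F_q$ as $\F_p$-vector spaces via the non-degenerate pairing $\langle a,b\rangle=\Tr(ab)$ for the trace $\Tr\colon\F_q\to\F_p$, so that the codeword indexed by $y\in\F_q$ has $w$-coordinate $\Tr(wy)$. For $y\neq 0$, using that $n=pB_1(\tfrac np)+\tfrac p2$ for $n\in\{1,\dots,p-1\}$ together with the constant-weight identity $\sum_{w}C_w[\Tr(wy)\neq 0]=2k$, one finds
\begin{align*}
 \age(\text{codeword}_y)=p\sum_{w\in\F_q^*}C_w\,B_1\!\left(\tfrac{\Tr(wy)}{p}\right)+pk ,
\end{align*}
so constant age $kp$ amounts to $\sum_{w\in\F_q^*}C_w\,B_1(\Tr(wy)/p)=0$ for all $y\in\F_q^*$. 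Now expand $C$ in the multiplicative characters of $\F_q^*$, $C_w=\sum_\Xi e_\Xi\,\Xi(w)$; the substitution $a=wy$ gives $\sum_{w}\Xi(w)B_1(\Tr(wy)/p)=\overline{\Xi(y)}\,B_{1,\Xi}^{(r)}$, so the condition reads $\sum_\Xi e_\Xi\,\overline{\Xi(y)}\,B_{1,\Xi}^{(r)}=0$ for all $y\in\F_q^*$, hence $e_\Xi B_{1,\Xi}^{(r)}=0$ for every $\Xi$ by linear independence of the characters of $\F_q^*$. Since $B_1$ is odd, the substitution $a\mapsto -a$ forces $B_{1,\Xi}^{(r)}=0$ whenever $\Xi$ is even; and for $\Xi$ odd the non-vanishing theorem proved in Section~\ref{sec:non_vanishing_of_B_1_chi} gives $B_{1,\Xi}^{(r)}\neq 0$. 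Therefore $e_\Xi=0$ for all odd $\Xi$, which means precisely that $C$ is an even function, i.e.\ $C_w=C_{-w}$, and the pairing argument of the previous paragraph finishes the proof.

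I expect the real obstacle to be the non-vanishing $B_{1,\Xi}^{(r)}\neq 0$ for odd characters of $\F_q^*$, which is deferred to Section~\ref{sec:non_vanishing_of_B_1_chi}; the remaining ingredients are either quoted (Bonisoli), already established (Corollary~\ref{cor:arithmetic_equation}, Proposition~\ref{prop:constr_codes_const_age}), or routine character and Bernoulli-function manipulations, with the one genuinely combinatorial point --- reading off the matrices $A_i$ from the symmetric multiset $C$ --- requiring care but not depth.
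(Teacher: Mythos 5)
Your proposal is correct and follows essentially the same route as the paper: Bonisoli's theorem to obtain the replicated-simplex-code structure, identification of the columns with elements of $\F_q^*$ via the trace form, and harmonic analysis on $\F_q^*$ combined with the non-vanishing of $B_{1,\chi}^{(r)}$ for odd characters to show the column multiset is symmetric under $w\mapsto -w$. Your expansion of the multiplicity function $C$ in multiplicative characters is simply the dual formulation of the paper's dimension count for the subspace $U\subseteq\C[\F_q^*]$ spanned by the vectors $S(x)$ and its orthogonal complement.
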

\begin{proof}
The result is a direct consequence of Theorem \ref{thm:Props_of_linear_code_L_Delta}, Corollary \ref{cor:arithmetic_equation}, Proposition \ref{prop:constr_codes_const_age} and the following theorem.
\end{proof}

\begin{thm}
  Let $L \subseteq \F_p^n$ be an $r$-dimensional linear code of
  constant age such that no coordinate is $0$ for all vectors in
  $L$. Then $n = m(p^r-1)/(p-1)$ for some even number $m = 2s$ and up
  to a permutation of the columns the generating $(r \times n)$-matrix
  of $L$ can be written in the form
  \begin{align*}
    (A_1, -A_1, A_2, -A_2, \ldots, A_s, -A_s ),
  \end{align*}
  where $A_1, \ldots, A_s$ are generating matrices for $r$-dimensional
  simplex codes.
\end{thm}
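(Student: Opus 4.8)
The plan is to combine Bonisoli's theorem (Theorem~\ref{thm:Bonisoli}), which already pins down the constant-\emph{weight} structure of $L$ as a replicated simplex code, with the non-vanishing of the generalized Bernoulli numbers $B_{1,\chi}^{(r)}$ for odd $\chi$, which refines this into the constant-\emph{age} structure by forcing the columns to occur in negation-pairs. I expect the only genuinely substantial point to be the computation that links ``constant age'' to those Bernoulli numbers; Bonisoli's theorem, the passage to an $\F_q$-model, character orthogonality, and the final grouping of negation-pairs into simplex-code blocks are routine.

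First I would fix a convenient model. Constant age implies constant weight with $2\age=p\omega$ (Proposition~\ref{prop:const_age_implies_const_weight}), so Theorem~\ref{thm:Bonisoli} applies: up to a monomial transformation and a permutation of columns, $L$ has a generating $(r\times n)$-matrix whose columns, regarded as nonzero vectors in $\F_p^r$, represent each of the $(p^r-1)/(p-1)$ projective points exactly $m$ times, so $n=m(p^r-1)/(p-1)$. Identifying $\F_p^r$ with $\F_q$ ($q=p^r$) as $\F_p$-vector spaces (suitably, using the non-degenerate trace form $\langle x,y\rangle=\Tr_{\F_q/\F_p}(xy)$), I may then assume the codewords are $\mathbf c(z)=(\Tr(z\xi_1),\ldots,\Tr(z\xi_n))$, $z\in\F_q$, with $\xi_1,\ldots,\xi_n\in\F_q^*$ whose classes in $\F_q^*/\F_p^*$ cover the projective line with constant multiplicity $m$; here $\Tr(\cdot)\in\{0,\ldots,p-1\}$ denotes the representative of the trace, and the monomial transformation is harmless because rescaling a column preserves its class in $\F_q^*/\F_p^*$.

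Next I would turn ``constant age'' into a character identity on $\F_q^*$. From $t=\tfrac p2+pB_1(t/p)$ for $t\in\{1,\ldots,p-1\}$ and $B_1(0)=0$ one obtains
\begin{align*}
  \age(\mathbf c(z))=\frac p2\,\omega(\mathbf c(z))+p\sum_{j=1}^nB_1\!\Bigl(\frac{\Tr(z\xi_j)}{p}\Bigr),
\end{align*}
and since $\omega(\mathbf c(z))$ equals the constant weight for $z\neq0$, the code $L$ has constant age if and only if $\sum_jF(z\xi_j)=0$ for all $z\in\F_q$, where $F(\xi):=B_1(\Tr(\xi)/p)$ for $\xi\in\F_q^*$ and $F(0):=0$. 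Expanding $F$ in the multiplicative characters of $\F_q^*$, a short computation shows that $F$ is odd with vanishing constant term, and that its Fourier coefficient at an odd character $\chi$ is $\widehat F(\chi)=\tfrac1{q-1}B_{1,\overline\chi}^{(r)}$, which is nonzero by the non-vanishing theorem. Plugging $F=\sum_{\chi\text{ odd}}\widehat F(\chi)\chi$ into the relation gives $\sum_{\chi\text{ odd}}\widehat F(\chi)\bigl(\sum_j\chi(\xi_j)\bigr)\chi(z)=0$ for all $z\in\F_q$, so by linear independence of the characters on $\F_q^*$ and $\widehat F(\chi)\neq0$ I conclude that $\sum_{j=1}^n\chi(\xi_j)=0$ for every odd character $\chi$ of $\F_q^*$.

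Finally I would read off the structure. Since $\sum_j\chi(-\xi_j)=\chi(-1)\sum_j\chi(\xi_j)$ equals $\sum_j\chi(\xi_j)$ when $\chi$ is even and equals $0=\sum_j\chi(\xi_j)$ when $\chi$ is odd, the multisets $\{\xi_1,\ldots,\xi_n\}$ and $\{-\xi_1,\ldots,-\xi_n\}$ have identical multiplicative Fourier transforms on $\F_q^*$ and therefore coincide; thus $\{\xi_j\}$ is negation-invariant. As $p>2$, negation is a fixed-point-free involution of $\F_q^*$ preserving every line, so on each line the $m$ columns split into $m/2$ pairs $\{\eta,-\eta\}$; in particular $m=2s$ is even. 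Enumerating, for each line, its $s$ pairs and letting $A_i$ collect the first member of the $i$-th pair over all lines, each $A_i$ has exactly one column on every line and hence generates an $r$-dimensional simplex code, while $-A_i$ collects the second members; listing the columns line by line exhibits $(A_1,-A_1,\ldots,A_s,-A_s)$ as a generating matrix of $L$ up to a permutation of columns, and $n=2s(p^r-1)/(p-1)$, as claimed.
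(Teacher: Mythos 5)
Your proposal is correct and follows essentially the same route as the paper: Bonisoli's theorem for the constant-weight structure, the trace-form identification of the columns with elements of $\F_q^*$, the identity $\sum_j B_1(\Tr(z\xi_j)/p)=0$ extracted from the constant-age condition, and the non-vanishing of $B_{1,\chi}^{(r)}$ for odd $\chi$ to force the columns into negation-pairs. Your direct Fourier expansion of $\xi\mapsto B_1(\Tr(\xi)/p)$ over the multiplicative characters is merely a repackaging of the paper's $U$/$U^\perp$ dimension count in $\C[\F_q^*]$, and your explicit final step (pairing within each line to get $m=2s$ and assembling the blocks $(A_i,-A_i)$), which the paper leaves implicit, is sound.
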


\begin{proof}
  Let $A$ be a generating $(r \times n)$-matrix of the $r$-dimensional
  linear code $L \subseteq \F_p^n$. By Proposition
  \ref{prop:const_age_implies_const_weight}, $L$ has constant
  weight. Since $A$ does not contain zero-columns, by Theorem
  \ref{thm:Bonisoli}, it is a sequence of $m$ matrices of
  $r$-dimensional simplex codes.  In particular, there exist exactly
  $m$ columns in $A$ representing any given point in $\P^r(\F_p)$, we
  have
  \begin{align*}
    n = m \frac{p^r-1}{p-1}.
  \end{align*}
  and $L$ has constant weight $mp^{r-1}$ (see Proposition
  \ref{prop:weight_of_simplex_code}).
  
  So, in order to prove the theorem, it is sufficient to show that the
  columns of the matrix $A$ can be reordered in pairs of the type
  $\{X, -X\}$.
  \begin{rem}\label{rem:wlg_p_odd}
    Observe that if $p=2$, then this is obvious since the only nonzero
    element in $\F_2$ is $1$ and $1+1=0$. In the following, we will
    assume that $p$ is an odd prime.
  \end{rem}

  The standard Terminal Lemma \cite{Reid:YoungPersonsGuide}, which was
  used in the generalized theorem of White \cite{BH:GeneralizedWhite},
  claims that up to some zero entries the last statement holds true
  for every row of $A$. However, different rows of $A$ may define
  different decompositions of coordinates in pairs.  The main
  difficulty is to guarantee that this decomposition into pairs holds
  true for all rows of $A$ simultaneously.
%     The main difficulty is to
%     divide the set $\{1, \ldots, n\}$ in such pairs simultaneously so
%     that the form $\{x, -x\}$ simultaneously for all rows of $A$.

  We want to reformulate the last statement using the finite field
  $\F_q$ with $q = p^r$ elements. For this, we choose an $\F_p$-basis
  $\mathbf{v}_1, \ldots, \mathbf{v}_n$ of $\F_q$ and identify the
  $r$-dimensional columns of $A$ with some elements $\alpha_i$ of
  $\F_q^*$. So the generating matrix $A$ can be identified with a
  row $(\alpha_1, \ldots, \alpha_n) \in \left(\F_q^*\right)^n$.
  
  \begin{prop}\label{prop:reformulation_of_main_thm}
    Our goal is to show that $n=2s(p^r -1)/(p-1)$ is an even integer
    (for $s$ an integer) and up to reordering the coordinates, we can
    write the matrix $A$ in the form
    \begin{equation}
      (\alpha_1, -\alpha_1, \alpha_2, -\alpha_2, \ldots, \alpha_{n/2}, - \alpha_{n/2}).%,  \;\;x_i \in\F_q^*. 
      \label{version2}
    \end{equation}
  \end{prop}

  Denote by $T$ the matrix of the $\F_p$-bilinear form
  \begin{align*}
    \Tr: \F_q \times \F_q \to \F_p, \;\; (x,y) \mapsto \Tr(xy)
  \end{align*}
  in the basis $\mathbf{v}_1, \ldots, \mathbf{v}_n$. Since $\F_q$ is
  separable over $\F_p$, the matrix $T$ is non-degenerate and the
  $r\times n$-matrix $TA$ is also a generating matrix of the
  $r$-dimensional linear code $L$.  Since $TA$ can be obtained from
  $A$ by row operations, the statement of equation (\ref{version2})
  for $A$ is equivalent to the same statement for $TA$.  Any nonzero
  element $\mathbf{v} \in L$ can be obtained as product of a nonzero
  $(1 \times r)$-matrix $\Lambda = (\lambda_1, \ldots, \lambda_r)$
  with the $r\times n$-matrix $TA$.  If we consider the matrix
  $\Lambda \in \F_p^r \setminus \{ 0\}$ as an element $\lambda$ of the
  multiplicative group $\F_q^*$, then any nonzero vector
  $\mathbf{v} \in L$ can be written as
  \[
  \mathbf{v} = ( \Tr(\lambda \alpha_1), \Tr(\lambda \alpha_2), \ldots,
  \Tr(\lambda \alpha_n) ) \in \F_p^n
  \]
  for some $\lambda \in \F_q^*$.

  Since $L$ has constant age, we get $2\age(\mathbf{v})=\omega(\mathbf{v})$
  for all $\mathbf{0}\neq\mathbf{v}\in L$ (see Proposition
  \ref{prop:const_age_implies_const_weight}). In particular, we have
  \[
  \sum_{i =1}^n B_1 \left( \frac{\Tr(y \alpha_i)}{p} \right) = 0, \;\;
  \forall y \in \F_q^*
  \]
  where $B_1$ denotes the \emph{$1$st (periodic) Bernoulli function}
  which maps a real number $x$ to
  \begin{align*}
    B_1(x)=
    \begin{cases}
      \fractional{x}-\frac{1}{2} & x\not\in\Z\\
      0 & x\in\Z
    \end{cases}
  \end{align*}
  where $\fractional{x}$ denotes the \emph{fractional part} of $x$,
  \ie $\fractional{x}=x-\floor{x}$ where $\floor{x}$ is the biggest
  integer which is smaller than or equal to $x$.

  Let $G:= \F_q^*$ be the cyclic group of even order $p^r -1$
  ($p$ is an odd prime by Remark \ref{rem:wlg_p_odd}). We consider
  $V:= \C[G]$ the regular representation space of $G$ over $\C$
  together with the canonical basis $\sigma_g$ $(g\in G)$. The
  $\C$-space $V$ splits into a direct sum of $1$-dimensional
  $G$-invariant subspaces $V_i$ $ (i =1, \ldots, |G|)$ corresponding
  to $p^r-1$ different complex characters of $G$.  The $1$-dimensional
  $G$-invariant subspace $V_i$ corresponding to a character $\chi_i$
  is generated by the vector
  \[
  e_i \coloneqq \frac{1}{|G|} \sum_{g \in G} \chi(g^{-1})
  \sigma_g\in\C[G].
  \]

  To any character $\chi:G \to \C^*$ one associates a complex
  number
  \[
  B_{1, \chi}^{(r)} \coloneqq \sum_{g \in G} \chi(g) B_1 \left( \frac{
      \Tr(g)}{p} \right).
  \]
  \begin{rem}
    Observe that $B_{1,\chi}^{(1)}$ coincides with the classical
    generalized Bernoulli number (see
    \cite[p. 31]{Washington:CyclotomicFields}).
  \end{rem}

  A character $\chi: G \to \C^*$ is called {\it odd} if $\chi(-1)
  = -1$. There exist exactly $(p^r -1)/2$ odd characters of $G$.  We
  need the following statement, which will be proved in section \ref{sec:non_vanishing_of_B_1_chi}.

  \begin{thm}\label{thm:Nonvanishing_of_B1chi}
    Let $\chi: \F_q^* \to \C^*$ be a an odd character.  Then
    $B_{1, \chi}^{(r)} \neq 0$.
  \end{thm}

  Let $\{\sigma_g^* \}_{g \in G}$ be the dual basis of the dual space
  $V^*$. We consider in $V$ the subspace $U$ generated by the elements
  $\{ S(x) \}_{x \in G}$
  \[
  S(x) \coloneqq \sum_{ g \in G} B_1 \left( \frac{\Tr(xg)}{p} \right)
  \sigma_g
  \]
  Denote by $U^\perp$ the orthogonal complement in $V^*$ with respect
  to the canonical pairing
  \[
  \langle * , * \rangle \; : \; V \times V^* \to \C.
  \]
  Then $(p^r-1)/2$ linearly independent elements belong to $U^\perp$,
  because
  \[
  \langle S(x), e^*_g + e^*_{-g} \rangle = B_1 \left(
    \frac{\Tr(xg)}{p} \right) + B_1 \left( \frac{\Tr(-xg)}{p} \right)
  = 0.
  \]
  In order to show that $\{e^*_g + e^*_{-g}:g\in G\}$ forms a basis of
  $U^\perp$, we need to show that $\dim U^\perp = (p^r -1) - \dim U
  \leq (p^r -1)/2$, or, equivalently, that $\dim U \geq (p^r
  -1)/2$. We construct explicitly $(p^r-1)/2$ linearly independent
  vectors $u_i$ corresponding to odd characters $\chi_i$ of $G$:
  \begin{align*}
    u_i\coloneqq |G| B_{1, \chi_i}^{(r)} e_i &= B_{1, \chi_i}^{(r)}
    \sum_{g \in G} \chi_i(g^{-1}) \sigma_g = \sum_{ g \in G}
    \chi_i(g^{-1}) B_{1, \chi_i}^{(r)}
    \sigma_g\\
    &= \sum_{g \in G} \sum_{h \in G} \chi_i(g^{-1}h) B_1 \left(
      \frac{\Tr(h)}{p} \right) \sigma_g
  \end{align*}
  and show that they are contained in $U$. Using the substitution $g'
  = g^{-1}h$, this follows from
  \[
  u_i = \sum_{g\in G} \sum_{g'\in G} \chi_i(g') B_1 \left(
    \frac{\Tr(gg')}{p} \right) \sigma_g = \sum_{g' \in G} \chi_i(g')
  S(g') \in U.
  \]
  Now we can prove Proposition
  \ref{prop:reformulation_of_main_thm}. The sum $\sum_{i=1}^n
  e^*_{\alpha_i}$ belongs to $U^\perp$, because
  \[
  \langle S(x), \sum_{i=1}^n e^*_{\alpha_i} \rangle = \sum_{i=1}^n B_1
  \left( \frac{\Tr(x\alpha_i)}{p} \right) = 0\quad\text{for all}\;x\in
  G.
  \]
  This means that $e^*_g$ and $e^*_{-g}$ appear in the sum
  $\sum_{i=1}^n e^*_{\alpha_i}$ with the same multiplicity and we can
  divide the sequence $\alpha_1. \ldots, \alpha_n$ into pairs of
  elements of $G = \F_q^*$ with opposite sign.
\end{proof}

\section{\texorpdfstring{Nonvanishing of $B_{1,\chi}^{(r)}$}{Nonvanishing of B(1,chi)(r)}}
\label{sec:non_vanishing_of_B_1_chi}

In this section we will prove Theorem \ref{thm:Nonvanishing_of_B1chi}. Fix a prime power $q=p^r$ and an odd character $\chi:\F_q^*\to\C^*$.
Let us consider the square of the absolute value of $B_{1,\chi}^{(r)}$:
\begin{align*}
  |B_{1,\chi}^{(r)}|^2 =B_{1,\chi}^{(r)} \cdot \overline{B_{1,\chi}^{(r)}}
  =\sum_{a\in\F_q^*} \sum_{b\in\F_q^*} \chi(ab^{-1}) B_1\left( \frac{\Tr(a)}{p} \right) B_1\left( \frac{\Tr(b)}{p} \right)
%   &=\sum_{c\in\F_q^*}\chi(c)\sum_{a\in\F_q^*}B_1\left(\frac{{\rm
%         Tr}(a)}{p}\right)B_1\left(\frac{{\rm Tr}(ac^{-1})}{p}\right)\\
\end{align*}
Using the substitution $c=ab^{-1}$, we get
\begin{align*}
  |B_{1,\chi}^{(r)}|^2&=\sum_{c\in\F_q^*} \chi(c) \sum_{a\in\F_q^*} B_1\left( \frac{\Tr(a)}{p} \right) B_1\left( \frac{\Tr(ac^{-1})}{p} \right). 
\end{align*}

We can extend the inner sum over $a$ to contain $0\in\F_q$, 
since $\Tr(0)=0$. Thus, we have to understand the sum
$\sum_{a\in\F_q} B_1( \Tr(a)/p )B_1( \Tr(ac^{-1})/p )$ for all $c\in\F_q^*$.

\begin{prop}\label{prop:B1chi_arithmetic_equation}
 We distinguish two cases:
  \begin{enumerate}
  \item If $c\in\F_p\subseteq\F_q$, then
    \[
    \sum_{a\in\F_q}B_1\left(\frac{\Tr(a)}{p}\right)B_1\left(\frac{\Tr(ac)}{p}\right)=p^{r-1}\sum_{a=1}^{p-1}B_1
    \left(\frac{a}{p}\right) B_1\left(\frac{ac}{p}\right).
    \]
  \item If $c\in\F_q\setminus\F_p$, then
    \[
    \sum_{a\in\F_q}B_1\left(\frac{\Tr(a)}{p}\right)B_1\left(\frac{\Tr(ac)}{p}\right)=0.
    \]
  \end{enumerate}
\end{prop}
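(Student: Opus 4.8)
The plan is to exploit that $B_1(\Tr(a)/p)$ depends on $a \in \F_q$ only through the value of $\Tr(a)$ in $\F_p$ (by $1$-periodicity of $B_1$), together with two standard facts about the trace: it is $\F_p$-linear, and the trace form is non-degenerate, so that every $\F_p$-linear functional $\F_q \to \F_p$ is of the form $a \mapsto \Tr(\lambda a)$ for a unique $\lambda \in \F_q$. Throughout, the sum in question runs over all $a \in \F_q$, as already arranged just before the statement.

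For case (1), I would use $\F_p$-linearity of the trace to write $\Tr(ac) = c\,\Tr(a)$ when $c \in \F_p$, so the sum becomes $\sum_{a \in \F_q} B_1(\Tr(a)/p)\,B_1(c\,\Tr(a)/p)$. Since $\Tr \colon \F_q \to \F_p$ is a surjective $\F_p$-linear map, each of its fibers has exactly $p^{r-1}$ elements; grouping the sum according to the value $t \coloneqq \Tr(a)$ then yields $p^{r-1}\sum_{t \in \F_p} B_1(t/p)\,B_1(ct/p)$, and the term $t = 0$ drops because $B_1(0) = 0$. This is precisely the claimed right-hand side (using once more that $B_1$ is $1$-periodic, so $B_1(ct/p)$ depends only on $ct \bmod p$).

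For case (2), the key point is that the two $\F_p$-linear functionals $a \mapsto \Tr(a)$ and $a \mapsto \Tr(ca)$ on $\F_q$ correspond, under the description above, to the elements $1$ and $c$ of $\F_q$, and these are $\F_p$-linearly independent exactly when $c \notin \F_p$. Hence, for $c \in \F_q \setminus \F_p$ (which forces $r \geq 2$), the map $\F_q \to \F_p^2$, $a \mapsto (\Tr(a), \Tr(ca))$, is surjective with every fiber of size $p^{r-2}$, so
\[
\sum_{a \in \F_q} B_1\!\left(\frac{\Tr(a)}{p}\right) B_1\!\left(\frac{\Tr(ac)}{p}\right) = p^{r-2}\Bigl(\sum_{s \in \F_p} B_1(s/p)\Bigr)\Bigl(\sum_{t \in \F_p} B_1(t/p)\Bigr),
\]
and each one-variable sum vanishes, since $\sum_{s=1}^{p-1}\bigl(\tfrac{s}{p}-\tfrac12\bigr) = \tfrac{p-1}{2}-\tfrac{p-1}{2}=0$ (the $s=0$ term being zero anyway).

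The whole argument is a reduction to counting fibers of $\F_p$-linear maps, so I do not expect a serious obstacle. The only points requiring a little care are the non-degeneracy of the trace form (which is where separability of $\F_q/\F_p$ enters and which is already invoked in the excerpt) and the observation that $1, c$ span a $2$-dimensional $\F_p$-subspace of $\F_q$ precisely when $c \notin \F_p$; the elementary identity $\sum_{t \in \F_p} B_1(t/p) = 0$ is exactly what makes case (2) collapse to zero.
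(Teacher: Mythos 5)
Your proposal is correct and follows essentially the same route as the paper: the paper decomposes $\F_q$ as $\ker(\Tr)\oplus\F_p v$ in case (1) and as $(W_1\cap W_2)\oplus\F_p w_1\oplus\F_p w_2$ in case (2), which is exactly your fiber-counting for the maps $a\mapsto\Tr(a)$ and $a\mapsto(\Tr(a),\Tr(ca))$, with non-degeneracy of the trace form used in the same place to see that the two functionals are independent when $c\notin\F_p$. The only difference is presentational: you count fibers directly, while the paper writes out the direct-sum decompositions and substitutes explicitly.
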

Let us grant the previous result for a moment. We get

\begin{align*}
  |B_{1,\chi}^{(r)}|^2 = p^{r-1}\sum_{c\in\F_p^*}\chi(c)\sum_{a=1}^{p-1}B_1\left(\frac{a}{p}\right)
  B_1\left(\frac{ac^{-1}}{p}\right)
  \end{align*}
 
 Using the substitution $b=ac^{-1}$, we obtain
 \begin{align*}
  |B_{1,\chi}^{(r)}|^2 &= p^{r-1}\sum_{b\in\F_p^*}\chi(ab^{-1})\sum_{a=1}^{p-1}B_1\left(\frac{a}{p}\right)
  B_1\left(\frac{b}{p}\right)\\
  &=p^{r-1}\left(\sum_{a=1}^{p-1}\chi(a)B_1\left(\frac{a}{p}\right)\right)\cdot\left(\sum_{b=1}^{p-1}\overline{\chi}(b)B_1\left(\frac{b}{p}\right)\right)=p^{r-1}\cdot|
  B_{1,\left.\chi\right|_{\F_p}}^{(1)}|^2.
\end{align*}
By \cite[Chapter 4, p. 38]{Washington:CyclotomicFields}), it follows that $B_{1,\left.\chi\right|_{\F_p}}^{(1)}\neq0$. This proves Theorem \ref{thm:Nonvanishing_of_B1chi}.

\begin{proof}[Proof of Proposition \ref{prop:B1chi_arithmetic_equation}]
For the first part,
observe that the trace map $\Tr:\F_q\to\F_p$ is a linear functional with kernel $W:=\ker(\Tr)$ having codimension $1$ in $\F_q$. We can find a vector $v\in\F_q$ such
that $\F_q=W\oplus\F_pv$. By using the fact that $\Tr(\lambda
a)=\lambda\Tr(a)$ for all $\lambda\in\F_p$ and all $a\in\F_q$, we
get
\begin{align*}
  \sum_{a\in\F_q}B_1\left(\frac{\Tr(a)}{p}\right)B_1\left(\frac{\Tr(ac)}{p}\right)&=\sum_{\lambda\in\F_p}\sum_{w\in
    W}B_1\left(\frac{\Tr(w+\lambda
      v)}{p}\right)B_1\left(\frac{c\Tr(w+\lambda v)}{p}\right)\\
  &=\sum_{\lambda\in\F_p}\sum_{w\in W}B_1\left(\frac{\lambda\Tr(v)}{p}\right)B_1\left(\frac{\lambda c\Tr(v))}{p}\right)
\end{align*}
  The product of the Bernoulli functions does not depend on $w\in W$, $|W|=p^{r-1}$ and $\Tr(v)\neq0$. Thus, by
    substituting $\lambda':=\lambda\Tr(v)$, we obtain
\begin{align*}
  \sum_{a\in\F_q}B_1\left(\frac{\Tr(a)}{p}\right)B_1\left(\frac{\Tr(ac)}{p}\right)=p^{r-1}\sum_{\lambda'=1}^{p-1}B_1\left(\frac{\lambda'}{p}\right)B_1\left(\frac{\lambda'
      c}{p}\right).
\end{align*}

For the second part, assume that
$c\in\F_q\setminus\F_p$. We get two linear functionals $\Tr:\F_q\to\F_p;a\mapsto\Tr(a)$ and $f:\F_q\to\F_p;a\mapsto\Tr(ac)$ with kernels $W_1 \coloneqq \ker(\Tr)$ and $W_2 \coloneqq \ker(f)$. We
claim that $W_1\neq W_2$. Assume by contradiction that $W_1=W_2$. Then, 
we can find $v\in\F_q$ such that $W_i\oplus\F_pv=\F_q$ for $i=1,2$. Using those two decompositions of $\F_q$, we obtain for all
$a\in\F_q$
\[
\frac{f(v)}{\Tr(v)}\Tr(a)=f(a)
\]
where the quotient $c' \coloneqq f(v)/\Tr(v)$ is an element of
$\F_p^*$. This implies that $f(a)=\Tr(c'a)$ for all
$a\in\F_q$. Since $\F_q$ is separable over $\F_p$, the $\F_p$-bilinear form 
\begin{align*}
\Tr:\F_q\times\F_q\to\F_p;(a,b)\mapsto\Tr(ab) 
\end{align*}
is not degenerate. Hence, the equality
\begin{align*}
 f(a)=\Tr(ac=\Tr(ac')\quad\text{for all}\;a\in\F_q
\end{align*}
yields $c'=c$. Contradiction.
% But the factor $c'$ is uniquely determined by $f$ which
% implies that $c=c'\in\F_p$ contradicting our choice of $c$.

Thus $W_1\cap W_2$ is a subspace of codimension $2$ in
$F_q$.
%Furthermore $U_1+U_2=\F_q$.
Thus we can find vectors $w_1\in
W_1$ and $w_2\in W_2$ such that 
\[
\F_q=(W_1\cap W_2)\oplus\F_p w_1\oplus\F_p w_2.
\]
Using this decomposition, we can write
\begin{align*}
  &\sum_{a\in\F_q}B_1\left(\frac{\Tr(a)}{p}\right)B_1\left(\frac{\Tr(ac)}{p}\right)\\
  &=\sum_{w\in W_1\cap
    W_2}\sum_{\lambda_1\in\F_p}\sum_{\lambda_2\in\F_p}B_1\left(\frac{\Tr(w+\lambda_1 w_1+\lambda_2 w_2)}{p}\right)B_1\left(\frac{\Tr(w+\lambda_1 w_1+\lambda_2 w_2)c}{p}\right)\\
  &=\sum_{w\in W_1\cap
    W_2}\sum_{\lambda_1\in\F_p}\sum_{\lambda_2\in\F_p}B_1\left(\frac{\lambda_2\Tr(w_2)}{p}\right)B_1\left(\frac{\lambda_1 c\Tr(w_1)}{p}\right)
 \end{align*}
    The product of the Bernoulli functions does
    not depend on $w\in W_1\cap W_2$ and $|W_1\cap W_2|=p^{r-2}$. Using the substitutions $\lambda_1':=\lambda_1 c\Tr(w_1)$
    and $\lambda_2':=\lambda_2\Tr(w_2)$, we get
 \begin{align*}
  \sum_{a\in\F_q}B_1\left(\frac{\Tr(a)}{p}\right)B_1\left(\frac{\Tr(ac)}{p}\right)&=p^{r-2}\sum_{\lambda'_1\in\F_p}\sum_{\lambda'_2\in\F_p}
  B_1\left(\frac{\lambda'_2}{p}\right)
  B_1\left(\frac{\lambda'_1}{p}\right)\\
  &=p^{r-2}\left(\sum_{\lambda'_1\in\F_p}
    B_1\left(\frac{\lambda'_1}{p}\right)\right)
  \left(\sum_{\lambda'_2\in\F_p}B_1\left(\frac{\lambda'_2}{p}\right)\right)
\end{align*}

It is straightforward to verify
\begin{align*}
 \sum_{\lambda\in\F_p}B_1\left(\frac{\lambda}{p}\right)=\sum_{\lambda=1}^{p-1}\left(\frac{\lambda}{p}-\frac{1}{2}\right)=\frac{p(p-1)}{2p}-\frac{p-1}{2}=0
\end{align*}

% The following observation shows that the previous expression equals to
% $0$.
% 
% \begin{align*}
%   2 \cdot \sum_{\lambda'\in\F_p} B_1\left( \frac{\lambda'}{p} \right)&=
%     \sum_{\lambda'_1\in\F_p} B_1\left( \frac{\lambda'_1}{p}\right) +
%     \sum_{\lambda'_2\in\F_p} B_1\left( \frac{-\lambda'_2}{p} \right)\\
%   &=\sum_{\lambda'\in\F_p} \left( B_1\left(\frac{\lambda'}{p} \right) + 
%   B_1\left( \frac{-\lambda'}{p} \right) \right)\\
%   &=\sum_{\lambda'\in\F_p} \left( B_1\left( \frac{\lambda'}{p} \right) -
%   B_1\left( \frac{\lambda'}{p} \right) \right) = 0
% \end{align*}
\end{proof}

\bibliographystyle{amsalpha}
\bibliography{ref}

\end{document}